\newtheorem{thm}{Theorem}[section]
\newtheorem{lem}[thm]{Lemma}
\theoremstyle{definition}
\theoremstyle{remark}
\newtheorem{rem}[thm]{Remark}
\numberwithin{equation}{section}
\newcommand{\BibTeX}{B\kern-0.1emi\kern-0.017emb\kern-0.15em\TeX}
\newcommand{\XYpic}{$\mathrm{X\kern-0.3em\raisebox{-0.18em}{Y}}$-$\mathrm{pic}\,$}
\newcommand{\cl}{C \kern -0.1em \ell}  %%Clifford algebra
\newcommand{\BR}{\mathbb{R}}
\newcommand{\BC}{\mathbb{C}}
\newcommand{\ed}{\end{document}}
\def\F{{\mathbb F}}
\def\BC{{\mathbb C}}
\def\Z{{\rm Z}}
\def\H{{\rm H}}
\def\Z{{\rm Z}}
\def\Aut{{\rm Aut}}
\def\ker{{\rm ker}}
\def\ad{{\rm ad}}
\def\mod{{\rm \;mod\; }}
\def\Lambd{{\rm\Lambda}}
\newcommand{\C}{C \kern -0.1em \ell} 
\begin{document}

%-------------------------------------------------------------------------
% editorial commands: to be inserted by the editorial office
%
%\firstpage{1} \volume{228} \Copyrightyear{2004} \DOI{003-0001}
%
%
%\seriesextra{Just an add-on}
%\seriesextraline{This is the Concrete Title of this Book\br H.E. R and S.T.C. W, Eds.}
%
% for journals:
%
%\firstpage{1}
%\issuenumber{1}
%\Volumeandyear{1 (2004)}
%\Copyrightyear{2004}
%\DOI{003-xxxx-y}
%\Signet
%\commby{inhouse}
%\submitted{March 14, 2003}
%\received{March 16, 2000}
%\revised{June 1, 2000}
%\accepted{July 22, 2000}
%
%
%
%---------------------------------------------------------------------------
%Insert here the title, affiliations and abstract:
%

\title[A Note on Centralizers and Twisted Centralizers in Clifford Algebras]
 {A Note on Centralizers and Twisted Centralizers in Clifford Algebras}
%----------Author 1
\author[E. Filimoshina]{Ekaterina Filimoshina}
\address{%
HSE University\\
Moscow 101000\\
Russia}
\email{filimoshinaek@gmail.com}
%
% \thanks{This file has been typeset with the option \texttt{draft} to illustrate that feature and its purpose.}
%----------Author 2
\author[D. Shirokov]{Dmitry Shirokov}
\address{%
HSE University\\
Moscow 101000\\
Russia
\medskip}
\address{
and
\medskip}
\address{
Institute for Information Transmission Problems of the Russian Academy of Sciences \\
Moscow 127051 \\
Russia}
\email{dm.shirokov@gmail.com}
%----------classification, keywords, date
\subjclass{15A66, 11E88}
\keywords{Clifford algebra, geometric algebra, degenerate Clifford algebra, centralizer, twisted centralizer}
\date{\today}
%----------additions
\dedicatory{Last Revised:\\ \today}
%%% ----------------------------------------------------------------------
\begin{abstract}
This paper investigates centralizers and twisted centralizers in degenerate and non-degenerate Clifford (geometric) algebras. We provide an explicit form of the centralizers and twisted centralizers of the subspaces of fixed grades, subspaces determined by the grade involution and the reversion, and their direct sums. The results can be useful for applications of Clifford algebras in computer science, physics, and engineering.
\end{abstract}
\label{page:firstblob}
%%% ----------------------------------------------------------------------
\maketitle
%%% ----------------------------------------------------------------------
% \tableofcontents

\section{Introduction}

In this work, we consider degenerate and non-degenerate real and complex Clifford (geometric) algebras $\C_{p,q,r}$ of arbitrary dimension and signature  (in
the case of any complex Clifford algebra, we can take $q = 0$). 
Degenerate Clifford algebras have  applications in physics \cite{br1}, geometry \cite{pga1,gunn1,ma}, computer vision and image processing \cite{cv1}, motion capture and robotics \cite{ro2}, neural networks and machine learning \cite{tf,cNN0,cNN}, etc.

Several recent works on Clifford algebras use the notion of centralizers and twisted centralizers in $\C_{p,q,r}$  \cite{cNN,garling,HelmBook,OnInner,GenSpin,OnSomeLie,ICACGA}.
We call \textit{a centralizer of a set in $\C_{p,q,r}$} a subset of all elements of $\C_{p,q,r}$ that commute with all elements of this set. 
\textit{A twisted centralizer of a set in $\C_{p,q,r}$} is a subset of such multivectors that their projections onto the even $\C^{(0)}_{p,q,r}$ and odd $\C^{(1)}_{p,q,r}$ subspaces commute and anticommute respectively with all elements of this set (see details in Section \ref{section_centralizers}). 
Centralizers and twisted centralizers of some particular sets in $\C_{p,q,r}$ are used in literature for various purposes. 
For example, the recent paper \cite{cNN} finds an explicit form of the twisted centralizer of the grade-$1$ subspace in $\C_{p,q,r}$ and applies it in the construction of Clifford group equivariant neural networks. 
The work \cite{br1} uses the explicit form of the same twisted centralizer when considering degenerate spin groups. 
The book \cite{garling} finds an explicit form of the centralizer of the even subspace in the case of the non-degenerate Clifford algebra $\C_{p,q,0}$.
In the papers \cite{OnInner,GenSpin}, the centralizers and twisted centralizers of the even subspace and the grade-$1$ subspace are employed in consideration of Lie groups preserving the even and odd subspaces under the adjoint and twisted adjoint representations in the non-degenerate Clifford algebras $\C_{p,q,0}$.
The works \cite{OnSomeLie,ICACGA} find an explicit form of these centralizers in the case of arbitrary $\C_{p,q,r}$.

In light of appearance of centralizers and twisted centralizers in $\C_{p,q,r}$ in the recent papers,
we decided to investigate them in this note.
We concentrate on the centralizers and twisted centralizers of the subspaces of fixed grades $\C^{k}_{p,q,r}$, $k=0,1,\ldots,n$, the subspaces $\C^{\overline{m}}_{p,q,r}$, $m=0,1,2,3$, determined by the grade involution and the reversion, and their direct sums. In particular, we consider the centralizers and twisted centralizers of the even $\C^{(0)}_{p,q,r}$ and odd $\C^{(1)}_{p,q,r}$ subspaces. We find an explicit form of these centralizers and twisted centralizers in the case of arbitrary $k=0,1,\ldots,n$ and $m=0,1,2,3$. 
We study the relations between the considered centralizers and the twisted centralizers.
This paper also considers the centralizers and twisted centralizers in the particular cases of the non-degenerate Clifford algebra $\C_{p,q,0}$ and the Grassmann algebra $\C_{0,0,n}$. Theorems \ref{theorem_cc_k_even}, \ref{centralizers_qt}, \ref{centralizers_qt_ds} and Lemmas \ref{zm_zm1}, \ref{zl_zm}, \ref{lemma_KLV} are new.

The paper is structured as follows. 
Section \ref{section_Cpqr} introduces all the necessary notation related to $\C_{p,q,r}$. 
Section \ref{section_centralizers} provides an explicit form of the centralizers and twisted centralizers of the subspaces $\C^{k}_{p,q,r}$, $k=0,1,\ldots,n$ and considers the relations between them.
In Section \ref{section_examples}, we write out all the considered centralizers and twisted centralizers in the particular cases $\C_{p,q,0}$ and $\C_{0,0,n}$ and in the case of small $k\leq4$.
Section \ref{section_centralizers_qt} provides an explicit form of the centralizers and twisted centralizers of the subspaces $\C^{\overline{m}}_{p,q,r}$, $m=0,1,2,3$, and their direct sums, in particular, the even and odd subspaces.
The conclusions follow in Section \ref{section_conclusions}.
In Appendix \ref{sec_tild}, we describe the results related to the twisted adjoint representation with different signs, which is important for geometrical applications.

\section{Degenerate and Non-degenerate Clifford Algebras $\C_{p,q,r}$}\label{section_Cpqr}

Let us consider the Clifford (geometric) algebra \cite{hestenes,lounesto,p} $\C(V)=\C_{p,q,r}$, $p+q+r=n\geq1$, over a vector space $V$ with a symmetric bilinear form, where $V$ can be real $\BR^{p,q,r}$ or complex  $\BC^{p+q,0,r}$. We use $\F$ to denote the field of real numbers $\BR$ in the first case and the field of complex numbers $\BC$ in the second case. In this work, we consider both the case of the non-degenerate Clifford algebras $\C_{p,q,0}$ and the case of the degenerate Clifford algebras $\C_{p,q,r}$, $r\neq0$.
We use $\Lambda_r$ to denote the subalgebra $\C_{0,0,r}$, which is the Grassmann (exterior) algebra \cite{phys,lounesto}.
The identity element is denoted by $e$, the generators are denoted by $e_a$, $a=1,\ldots,n$. The generators satisfy the following conditions:
\begin{eqnarray}
    e_a e_b + e_b e_a = 2 \eta_{ab}e,\qquad \forall a,b=1,\ldots,n,
\end{eqnarray}
where $\eta=(\eta_{ab})$ is the diagonal matrix with $p$ times $+1$, $q$ times $-1$, and $r$ times $0$ on the diagonal in the real case $\C(\BR^{p,q,r})$ and $p+q$ times $+1$ and $r$ times $0$ on the diagonal in the complex case $\C(\BC^{p+q,0,r})$.

Let us consider the subspaces $\C^k_{p,q,r}$ of fixed grades $k=0,\ldots,n$. Their elements are linear combinations of the basis elements $e_A=e_{a_1\ldots a_k}:=e_{a_1}\cdots e_{a_k}$, $a_1<\cdots<a_k$, labeled by ordered multi-indices $A$ of length $k$, where $0\leq k\leq n$. The multi-index with zero length $k=0$ corresponds to the identity element $e$. The grade-$0$ subspace is denoted by $\C^0$ without the lower indices $p,q,r$, since it does not depend on the Clifford algebra's signature. We have $\C^{k}_{p,q,r}=\{0\}$ for $k<0$ and $k>n$.
Let us use the following notation:
\begin{eqnarray}
\C^{\geq k}_{p,q,r}&:=&\C^{k}_{p,q,r}\oplus\C^{k+1}_{p,q,r}\oplus\cdots\oplus\C^{n}_{p,q,r},
\\
\C^{\leq k}_{p,q,r}&:=&\C^0\oplus\C^{1}_{p,q,r}\oplus\cdots\oplus\C^{k}_{p,q,r}
\end{eqnarray}
for $0\leq k\leq n$. For example, $\C^{\geq0}_{p,q,r}=\C^{\leq n}_{p,q,r}=\C_{p,q,r}$ and $\C^{\geq n}_{p,q,r}=\C^{n}_{p,q,r}$.

Consider such conjugation operations as grade involution and reversion. The grade involute of an element  $U\in\C_{p,q,r}$ is denoted by $\widehat{U}$ and the reversion is denoted by $\widetilde{U}$. These operations satisfy
\begin{eqnarray}\label{^uv=^u^v}
\widehat{UV}=\widehat{U}\widehat{V},\qquad \widetilde{UV}=\widetilde{V}\widetilde{U},\qquad \forall U, V\in \C_{p,q,r}.
\end{eqnarray}

The grade involution defines the even $\C^{(0)}_{p,q,r}$ and odd $\C^{(1)}_{p,q,r}$ subspaces:
\begin{eqnarray}
\C^{(k)}_{p,q,r} = \{U\in\C_{p,q,r}:\;\; \widehat{U}=(-1)^kU\}=\bigoplus_{j=k\mod{2}}\C^j_{p,q,r},\quad k=0,1.
\end{eqnarray}
We can represent any element $U\in\C_{p,q,r}$ as a sum 
\begin{eqnarray}
U=\langle U\rangle_{(0)}+\langle U\rangle_{(1)},\qquad \langle U\rangle_{(0)}\in\C^{(0)}_{p,q,r},\quad \langle U\rangle_{(1)}\in\C^{(1)}_{p,q,r}.
\end{eqnarray}
We use the angle brackets $\langle \cdot \rangle_{(l)}$ to denote the operation of projection of multivectors onto the subspaces $\C^{(l)}_{p,q,r}$, $l=0,1$.
For an arbitrary subset $\H\subseteq\C_{p,q,r}$, we have
\begin{eqnarray}
    \langle \H \rangle_{(0)} := \H\cap\C^{(0)}_{p,q,r},\qquad  \langle \H \rangle_{(1)} := \H\cap\C^{(1)}_{p,q,r}.\label{H_even}
\end{eqnarray}

The grade involution and the reversion define four subspaces $\C^{\overline{0}}_{p,q,r}$,  $\C^{\overline{1}}_{p,q,r}$,  $\C^{\overline{2}}_{p,q,r}$, and  $\C^{\overline{3}}_{p,q,r}$ (they are called the subspaces of quaternion types $0, 1, 2$, and $3$ respectively in the papers \cite{quat1, quat2, quat3}):
\begin{eqnarray}
\C^{\overline{k}}_{p,q,r}=\{U\in\C_{p,q,r}:\; \widehat{U}=(-1)^k U,\;\; \widetilde{U}=(-1)^{\frac{k(k-1)}{2}} U\},\; k=0, 1, 2, 3.\label{qtdef}
\end{eqnarray}
Note that the Clifford algebra $\C_{p,q,r}$ can be represented as a direct sum of the subspaces $\C^{\overline{k}}_{p,q,r}$, $k=0, 1, 2, 3$, and viewed as $\mathbb{Z}_2\times\mathbb{Z}_2$-graded algebra with respect to the commutator and anticommutator \cite{b_lect}.
To denote the direct sum of different subspaces, we use the upper multi-index and omit the direct sum sign. For instance, $\C^{(1)\overline{2}4}_{p,q,r}:=\C^{(1)}_{p,q,r}\oplus\C^{\overline 2}_{p,q,r}\oplus\C^{4}_{p,q,r}$.

\section{Centralizers and Twisted Centralizers of the Subspaces of Fixed Grades}\label{section_centralizers}

Consider
the subset  $\Z^{m}_{p,q,r}$ of all elements of $\C_{p,q,r}$ commuting with all elements of the grade-$m$ subspace for some fixed $m$:
\begin{eqnarray}
\Z^{m}_{p,q,r}:=\{X\in\C_{p,q,r}:\quad X V = V X,\quad \forall V\in\C^{m}_{p,q,r}\}.\label{def_Zm}
\end{eqnarray}
Note that  ${\Z}^{m}_{p,q,r}=\C_{p,q,r}$ for $m<0$ and $m>n$.
We call the subset $\Z^{m}_{p,q,r}$ the \textit{centralizer} (see, for example, \cite{alg,garling}) of the subspace $\C^m_{p,q,r}$ in $\C_{p,q,r}$.

The center of the Clifford algebra $\C_{p,q,r}$ is also the centralizer but of the entire Clifford algebra $\C_{p,q,r}$.
We denote the center of the degenerate and non-degenerate Clifford algebra $\C_{p,q,r}$ by $\Z_{p,q,r}$. It is well known (see, for example, \cite{br1}) that
\begin{eqnarray}\label{Zpqr}
\Z_{p,q,r}=
\left\lbrace
\begin{array}{lll}
\Lambd^{(0)}_{r}\oplus\C^n_{p,q,r},&&\mbox{$n$ is odd},
\\
\Lambd^{(0)}_{r},&&\mbox{$n$ is even}.
\end{array}
\right.
\end{eqnarray}

Similarly, consider 
the set $\check{\Z}^m_{p,q,r}$:
\begin{eqnarray}
    \check{\Z}^m_{p,q,r}:=\{X\in\C_{p,q,r}:\quad \widehat{X} V = V X,\quad \forall V\in\C^{m}_{p,q,r}\}.\label{def_chZm}
\end{eqnarray}
Note that $\check{{\Z}}^{m}_{p,q,r}=\C_{p,q,r}$ for $m<0$ and $m>n$.
We call the set $\check{{\Z}}^{m}_{p,q,r}$ the \textit{twisted centralizer} of the subspace $\C^m_{p,q,r}$ in $\C_{p,q,r}$. 
The particular case $\check{\Z}^{1}_{p,q,r}$ is considered in the papers \cite{cNN,br1,OnSomeLie}.

Note that twisted centralizers can be defined in another way, which we denote by $\tilde{\Z}^m_{p,q,r}$:
\begin{eqnarray}
    \tilde{\Z}^m_{p,q,r}:=\{X\in\C_{p,q,r}:\quad X\langle V\rangle_{(0)} + \widehat{X} \langle V\rangle_{(1)} = VX,\quad \forall V\in\C^{m}_{p,q,r}\}.
\end{eqnarray}
We write out an explicit form of these objects in Appendix \ref{sec_tild}.

Note that the projections $\langle\Z^m_{p,q,r}\rangle_{(0)}$ and $\langle\check{\Z}^m_{p,q,r}\rangle_{(0)}$ of $\Z^{m}_{p,q,r}$ and $\check{\Z}^m_{p,q,r}$ respectively onto the even subspace $\C^{(0)}_{p,q,r}$  (\ref{H_even}) coincide by definition:
\begin{eqnarray}
    \langle\Z^m_{p,q,r}\rangle_{(0)}=\langle\check{\Z}^m_{p,q,r}\rangle_{(0)},\qquad \forall m=0,1,\ldots,n.\label{same_even}
\end{eqnarray}
In the case $m=0$, we have
\begin{eqnarray}\label{chCC_0}
\Z^0_{p,q,r}=\C_{p,q,r},\qquad \check{\Z}^0_{p,q,r}=\{X\in\C_{p,q,r}:\quad \widehat{X}=X\}=\C^{(0)}_{p,q,r}.
\end{eqnarray}
In Theorem \ref{theorem_cc_k_even}, we find explicit forms of the centralizers $\Z^m_{p,q,r}$ and the twisted centralizers  $\check{\Z}^m_{p,q,r}$ of the subspaces of fixed grades for an arbitrary $m=1,\ldots,n$.

To prove Theorem \ref{theorem_cc_k_even}, let us prove auxiliary Lemmas \ref{zm_zm1}, \ref{zl_zm}, and \ref{lemma_KLV}.
In Lemmas \ref{zm_zm1} and \ref{zl_zm}, we use that any non-zero $X\in\C_{p,q,r}$ has the following decomposition over a basis:
\begin{eqnarray}\label{bas_dec}
X=X_1+\cdots+X_k,\quad X_i=\alpha_ie_{A_i},\quad  \alpha_i\in\F^{\times},\quad i=1,\ldots,k,
\end{eqnarray}
where $A_i$ is an ordered multi-index, $A_i\neq A_j$ for $i\neq j$, and each $X_i$ is non-zero, i.e. $\alpha_i\neq0$.

\begin{lem}\label{zm_zm1}
For any even $m$, we have
\begin{eqnarray}\label{st_lemma4.1}
\langle \check{\Z}^{m}_{p,q,r}\rangle_{(1)}\subseteq \langle \check{\Z}^{m+1}_{p,q,r}\rangle_{(1)}.
\end{eqnarray}
\end{lem}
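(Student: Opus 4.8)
The plan is to reduce the statement to an elementary claim about anticommutation, then to individual basis blades, and to finish with a short combinatorial argument.

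First I would unwind the definitions. By (\ref{H_even}) an element of $\langle\check\Z^m_{p,q,r}\rangle_{(1)}$ is an \emph{odd} $X$, so $\widehat X=-X$, and the defining relation $\widehat X V=VX$ in (\ref{def_chZm}) becomes $XV+VX=0$ for all $V\in\C^m_{p,q,r}$. Thus $\langle\check\Z^m_{p,q,r}\rangle_{(1)}$ is exactly the set of odd $X$ anticommuting with every grade-$m$ element, and $\langle\check\Z^{m+1}_{p,q,r}\rangle_{(1)}$ is the set of odd $X$ anticommuting with every grade-$(m+1)$ element. Hence it suffices to prove: if an odd $X$ anticommutes with $\C^m_{p,q,r}$ for even $m$, then it anticommutes with $\C^{m+1}_{p,q,r}$.

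Next I would pass to basis blades. Writing $X=\sum_i\alpha_i e_{A_i}$ as in (\ref{bas_dec}) with every $|A_i|$ odd, I use the blade commutation rule $e_{A_i}e_B=(-1)^{|A_i||B|-|A_i\cap B|}e_Be_{A_i}$. For a grade-$m$ blade $e_B$ with $m$ even this sign is $(-1)^{|A_i\cap B|}$, so $e_{A_i}e_B+e_Be_{A_i}$ is $0$ when $|A_i\cap B|$ is odd and equals $2e_Be_{A_i}$ otherwise. Since $e_Be_{A_i}$ vanishes precisely when $A_i\cap B$ contains a degenerate generator (one squaring to zero), and the surviving products $e_Be_{A_i}=\pm(\ldots)e_{A_i\triangle B}$ are scalar multiples of distinct basis blades for distinct $A_i$, hence linearly independent, the condition $XV+VX=0$ for all $V\in\C^m_{p,q,r}$ is equivalent to the per-blade statement: for every $i$ and every grade-$m$ index set $B$, $|A_i\cap B|$ is odd \emph{or} $A_i\cap B$ contains a degenerate index. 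The identical computation in odd grade $m+1$ shows that $X$ anticommutes with $\C^{m+1}_{p,q,r}$ if and only if, for every $i$ and every grade-$(m+1)$ index set $C$, $|A_i\cap C|$ is even \emph{or} $A_i\cap C$ contains a degenerate index.

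The combinatorial core is then immediate for a fixed blade $A=A_i$. Given a grade-$(m+1)$ set $C$, if $A\cap C$ contains a degenerate index there is nothing to prove; if $A\cap C=\emptyset$ then $|A\cap C|=0$ is even; otherwise I pick any $x\in A\cap C$ (necessarily non-degenerate, since in this branch $A\cap C$ has no degenerate index) and set $B=C\setminus\{x\}$, a grade-$m$ set with $A\cap B=(A\cap C)\setminus\{x\}$ still free of degenerate indices. The hypothesis then forces $|A\cap B|$ to be odd, whence $|A\cap C|=|A\cap B|+1$ is even, which is exactly the desired conclusion for $C$.

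The one step requiring genuine care, and the place where I would expect to do the real bookkeeping, is the blade reduction: in the degenerate setting one must correctly identify when a product $e_Be_{A_i}$ collapses to zero, since this is precisely what produces the ``or contains a degenerate index'' alternatives and what makes the linear-independence argument valid. Once the two per-blade conditions are established, the passage from grade $m$ to grade $m+1$ by removing a single shared index is the short combinatorial heart of the proof.
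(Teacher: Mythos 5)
Your proposal is correct and follows essentially the same route as the paper's proof: decompose $X$ into basis blades (justified by the linear independence of the products $e_Be_{A_i}$), observe that membership is a per-blade condition, and pass from a grade-$(m+1)$ blade to a grade-$m$ blade by stripping off one index shared with $A_i$. Your explicit combinatorial reformulation (``$|A_i\cap B|$ odd or $A_i\cap B$ contains a degenerate index'') is only implicit in the paper's algebraic manipulation, but the underlying argument is the same.
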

\begin{proof}
In the case of even  $m<0$ or $m\geq n$, we have $\check{\Z}^m_{p,q,r}=\check{\Z}^{m+1}_{p,q,r}=\C_{p,q,r}$. For $m=0$, we have $\langle \check{\Z}^0_{p,q,r}\rangle_{(1)}=\langle \C^{(0)}_{p,q,r}\rangle_{(1)}=\{0\}$, where we use (\ref{chCC_0}). 
Let us consider the case $0<m<n$.
Consider a non-zero element $X\in\check{\Z}^{m}_{p,q,r}\cap\C^{(1)}_{p,q,r}$, where $m$ is even, and its 
 decomposition over a basis  (\ref{bas_dec}).
 For any fixed $e_{a_1\ldots a_{m}}\in\C^{m}_{p,q,r}$,  each summand $X_i$, $i=1,\ldots,k$, contains at least one such $e_{x_i}$ that $x_{i}\in\{a_1,\ldots, a_{m}\}$, because otherwise we have $X_i e_{a_1\ldots a_m}=e_{a_1\ldots a_m}X_i$, so 
$\widehat{X}e_{a_1\ldots a_m}\neq e_{a_1\ldots a_m}X$,
and we get a contradiction. 
Therefore, for any $e_{a_1\ldots a_{m+1}}\in\C^{m+1}_{p,q,r}$,
\begin{eqnarray}\label{f_lemma_2_0}
\widehat{X}e_{a_1\ldots a_{m+1}}=\pm \widehat{X_1}e_{a_1\ldots\check{x}_1\ldots a_{m+1}}e_{x_1}\pm\cdots\pm \widehat{X_k}e_{a_1\ldots\check{x}_k\ldots a_{m+1}}e_{x_k},
\end{eqnarray}
where $X_i$ contains $e_{x_i}$, $i=1,\ldots,k$, the sign  depends on the parity of the corresponding permutation, and the checkmarks indicate that the corresponding indices are missing. From (\ref{f_lemma_2_0}), we obtain
\begin{eqnarray}
\widehat{X}e_{a_1\ldots a_{m+1}}=
\pm e_{a_1\ldots\check{x}_1\ldots a_{m+1}}X_1 e_{x_1}\pm\cdots\pm e_{a_1\ldots\check{x}_k\ldots a_{m+1}}X_k e_{x_k}\label{f_1}
\\
=\pm e_{a_1\ldots\check{x}_1\ldots a_{m+1}}e_{x_1} X_1\pm\cdots\pm e_{a_1\ldots\check{x}_k\ldots a_{m+1}}e_{x_k}X_k=e_{a_1\ldots a_{m+1}} X,\label{f_2}
\end{eqnarray}
where all the signs preceding the terms remain the same in (\ref{f_lemma_2_0})--(\ref{f_2}),
since $X_i\in\check{\Z}^{m}_{p,q,r}\cap\C^{(1)}_{p,q,r}$ and $X_i$ contains $e_{x_i}$, $i=1,\ldots,k$. This completes the proof.
\end{proof}

\begin{lem}\label{zl_zm}
For any odd $m$, we have
\begin{eqnarray}
    \langle\Z^m_{p,q,r}\rangle_{(1)}\subseteq\langle\Z^{m+1}_{p,q,r}\rangle_{(1)}.\label{formula_lemma4.2}
\end{eqnarray}
\end{lem}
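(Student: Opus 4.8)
The plan is to mirror the proof of Lemma~\ref{zm_zm1}, interchanging the roles of commutation and anticommutation and of even and odd $m$. First I would dispose of the boundary cases. If $m<0$, then both $\Z^m_{p,q,r}$ and $\Z^{m+1}_{p,q,r}$ equal $\C_{p,q,r}$ (using $\Z^0_{p,q,r}=\C_{p,q,r}$ from (\ref{chCC_0}) when $m+1=0$), so (\ref{formula_lemma4.2}) is an equality. If $m\geq n$, then $m+1>n$ gives $\Z^{m+1}_{p,q,r}=\C_{p,q,r}$, so the right-hand side of (\ref{formula_lemma4.2}) is all of $\C^{(1)}_{p,q,r}$ and the inclusion is trivial. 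It thus remains to treat $0<m<n$. I would take a non-zero $X\in\Z^m_{p,q,r}\cap\C^{(1)}_{p,q,r}$ and write it in the basis form (\ref{bas_dec}), $X=X_1+\cdots+X_k$ with $X_i=\alpha_ie_{A_i}$; since $X$ is odd, every multi-index $A_i$ has odd length.

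The first step is a blade-wise reduction. For a fixed grade-$m$ basis element $e_B$, each product $X_ie_B$ is a scalar multiple (possibly zero) of the basis blade indexed by the symmetric difference $A_i\triangle B$, and the same is true of $e_BX_i$; since these blades are pairwise distinct for distinct $i$, no cancellation between different summands can occur. Hence $X\in\Z^m_{p,q,r}$ forces $X_ie_B=e_BX_i$ for every $i$ and every grade-$m$ blade $e_B$, i.e. each $X_i$ itself lies in $\Z^m_{p,q,r}$. (In the degenerate case $X_ie_B$ may vanish, but then $e_BX_i$ vanishes as well, so the reduction is unaffected.)

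The second step is an index-overlap claim: for every grade-$m$ index set $B$, the multi-index $A_i$ meets $B$. Indeed, if $A_i\cap B=\emptyset$, then the reordering rule $e_{A_i}e_B=(-1)^{|A_i|\,|B|+|A_i\cap B|}e_Be_{A_i}$ together with $|A_i|$ and $m=|B|$ both odd gives $e_{A_i}e_B=-e_Be_{A_i}$, so $X_i$ anticommutes with $e_B$; but disjointness forces $e_{A_i}e_B=\pm e_{A_i\cup B}\neq0$, contradicting $X_i\in\Z^m_{p,q,r}$. To promote $X$, I would fix a grade-$(m+1)$ blade $e_{B'}$, $B'=\{a_1,\dots,a_{m+1}\}$, and apply the claim to an $m$-element subset of $B'$ to obtain some $x_i\in A_i\cap B'$. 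Writing $e_{B'}=\pm e_{B'\setminus\{x_i\}}e_{x_i}$, using $X_i\in\Z^m_{p,q,r}$ on the grade-$m$ blade $e_{B'\setminus\{x_i\}}$, then commuting $X_i$ past $e_{x_i}$ (legitimate because $x_i\in A_i$ and $|A_i|$ odd make the commuting sign $(-1)^{|A_i|+1}=+1$), and recombining with the same extracted sign, I would obtain $X_ie_{B'}=e_{B'}X_i$. Summing over $i$ yields $X\in\Z^{m+1}_{p,q,r}$, hence $X\in\langle\Z^{m+1}_{p,q,r}\rangle_{(1)}$, which is (\ref{formula_lemma4.2}).

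I expect the main obstacle to be the parity bookkeeping that makes this the exact mirror image of Lemma~\ref{zm_zm1}: it is crucial that, with $m$ now odd, disjointness of $A_i$ from a grade-$m$ set forces \emph{anti}commutation, which is what contradicts the \emph{untwisted} relation $X_iV=VX_i$, whereas in Lemma~\ref{zm_zm1}, with $m$ even, disjointness forced commutation and contradicted the twisted relation. The same oddness of $|A_i|$ is exactly what is needed in the promotion step to slide $X_i$ through $e_{x_i}$ and to keep the extracted signs consistent. A secondary point requiring care is the degenerate case, where vanishing products must be checked not to disturb the blade-wise reduction.
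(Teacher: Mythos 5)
Your proof is correct and follows essentially the same route as the paper's: the same boundary cases, the same basis decomposition $X=X_1+\cdots+X_k$, the same observation that a summand disjoint from a grade-$m$ index set would anticommute with that blade (contradicting $X\in\Z^m_{p,q,r}$), and the same promotion step $e_{B'}=\pm e_{B'\setminus\{x_i\}}e_{x_i}$ with $X_i$ slid through $e_{x_i}$. The only difference is that you spell out explicitly the no-cancellation (blade-wise reduction) argument and the sign bookkeeping, which the paper leaves implicit.
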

\begin{proof}
In the case $m<0$ or $m\geq n$, we have ${\Z}^m_{p,q,r}={\Z}^{m+1}_{p,q,r}=\C_{p,q,r}$.
Let us consider the case $0<m<n$.
Consider a non-zero $X\in\Z^m_{p,q,r}\cap\C^{(1)}_{p,q,r}$, where $m$ is odd, and its decomposition (\ref{bas_dec}).   
For any fixed $e_{a_1\ldots a_{m}}\in\C^{m}_{p,q,r}$, each summand $X_i$, $i=1,\ldots,k$, contains at least one such $e_{x_i}$ that $x_{i}\in\{a_1,\ldots a_{m}\}$, because otherwise $X_i e_{a_1\ldots a_m}=e_{a_1\ldots a_m}\widehat{X_i}$, so 
$X e_{a_1\ldots a_m}\neq e_{a_1\ldots a_m} X$,
and we get a contradiction. 
Hence, for any $e_{a_1\ldots a_{m+1}}\in\C^{m+1}_{p,q,r}$, we have
\begin{eqnarray}\label{f_lemma_2_0_1}
X e_{a_1\ldots a_{m+1}}=\pm {X_1}e_{a_1\ldots\check{x}_1\ldots a_{m+1}}e_{x_1}\pm\cdots\pm{X_k}e_{a_1\ldots\check{x}_k\ldots a_{m+1}}e_{x_k},
\end{eqnarray}
where $X_i$ contains $e_{x_i}$, $i=1,\ldots,k$, the sign  depends on the parity of the corresponding permutation, and the checkmarks indicate that the corresponding indices are missing. From (\ref{f_lemma_2_0_1}), we obtain
\begin{eqnarray}
Xe_{a_1\ldots a_{m+1}}=
\pm e_{a_1\ldots\check{x}_1\ldots a_{m+1}}X_1 e_{x_1}\pm\cdots\pm e_{a_1\ldots\check{x}_k\ldots a_{m+1}}X_k e_{x_k}
\\
=\pm e_{a_1\ldots\check{x}_1\ldots a_{m+1}}e_{x_1} X_1\pm\cdots\pm e_{a_1\ldots\check{x}_k\ldots a_{m+1}}e_{x_k}X_k=e_{a_1\ldots a_{m+1}} X,\label{f_2_2}
\end{eqnarray}
where all the signs preceding the terms in (\ref{f_lemma_2_0_1})--(\ref{f_2_2}) remain the same,
since $X_i\in\Z^{m}_{p,q,r}\cap \C^{(1)}_{p,q,r}$
and $X_i$ contains $e_{x_i}$, $i=1,\ldots,k$. This completes the proof.
\end{proof}

\begin{rem}\label{rem_zlzm}
Note that the more general statement than (\ref{formula_lemma4.2}) holds true:
\begin{eqnarray}
    \Z^m_{p,q,r}\subseteq\Z^{m+1}_{p,q,r},\qquad \mbox{$m$ is odd},
\end{eqnarray}
which follows from Theorem \ref{theorem_cc_k_even} below and is provided in the formula (\ref{CC3CC4_2_2}) of  Remark \ref{CC3CC4}. 
Note that the statement (\ref{st_lemma4.1}) can not be generalized in a similar way. For any even $m$, we have
\begin{eqnarray}
&\langle \check{\Z}^m_{p,q,r} \rangle_{(0)}
\subseteq\langle \check{\Z}^{m+1}_{p,q,r} \rangle_{(0)},\quad \mbox{$n$ is odd},
\\
&(\langle \check{\Z}^m_{p,q,r} \rangle_{(0)}\setminus \C^{n}_{p,q,r})\subseteq\langle \check{\Z}^{m+1}_{p,q,r} \rangle_{(0)},\quad \mbox{$n$ is even},
\end{eqnarray}
which follows from Theorem \ref{theorem_cc_k_even} below as well.
\end{rem}

\begin{lem}\label{lemma_KLV}
For any $M\in\C^{m}_{p,q,r}$, $K\in\C^{k}_{p,q,r}$,  and $L\in\Lambda^{n-m}_r$, we have
\begin{eqnarray*}
(KL)M=
\left\lbrace
    \begin{array}{lll}
    \!M(KL)&&\!\!\!\!\!\!\mbox{if $m,k$ are even; $m,k,n$ are odd};
    \\
    \!M\widehat{(KL)}&&\!\!\!\!\!\!\mbox{if $m$ is odd, $k$ is even; $m,n$ are even, $k$ is odd.}
     \end{array}
    \right.
\end{eqnarray*}
\end{lem}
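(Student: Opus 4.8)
The plan is to reduce to basis blades and compute a single commutation sign. By bilinearity it suffices to treat $M=e_C$, $K=e_D$, $L=e_E$, where $C,D\subseteq\{1,\dots,n\}$ with $|C|=m$ and $|D|=k$, and $E$ is an ordered multi-index of length $n-m$ built only from the $r$ null indices $\{p+q+1,\dots,n\}$ (if $n-m>r$ then $\Lambda^{n-m}_r=\{0\}$, $L=0$, and the identity is trivial). The only tool needed is the standard blade commutation rule $e_Ae_B=(-1)^{|A||B|+|A\cap B|}e_Be_A$, which holds for null generators as well and follows from $e_ae_b=-e_be_a$ ($a\neq b$) by counting transpositions.

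The crux, and the only genuinely nonobvious point, is that the nullity of $L$ rigidly constrains which blade triples contribute. Since every generator occurring in $E$ squares to zero, the product $e_De_Ee_C$ vanishes unless $E$ is disjoint from both $C$ and $D$, as any repeated null index annihilates the product. But $|C|=m$, $|E|=n-m$, and $E\cap C=\emptyset$ inside an $n$-element index set force $C$ to be the complement $\{1,\dots,n\}\setminus E$, after which $D\cap E=\emptyset$ gives $D\subseteq C$. Hence for every surviving term the overlap $|C\cap D|=|D|=k$ is \emph{constant} rather than blade-dependent; this is precisely what prevents the sign from varying and rescues the identity. Moreover $D\cap E=\emptyset$ shows each nonzero $e_De_E$ has grade $k+(n-m)$, so $KL$ is homogeneous of that grade and $\widehat{KL}=(-1)^{k+n-m}KL$.

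It remains to extract the sign. Writing $e_De_E=\sigma\,e_F$ with $F=D\cup E$, $|F|=k+n-m$, and $\sigma=\pm1$ (the factor $\sigma$ appears on both sides and cancels), the commutation rule gives $(KL)M=(-1)^{(k+n-m)m+|F\cap C|}M(KL)$, and $|F\cap C|=|D\cap C|=k$ turns this into the constant sign $S:=(-1)^{(k+n-m)m+k}$. The proof then closes with a short parity check: one verifies that $S=+1$ exactly when ($m,k$ even) or ($m,k,n$ odd), giving $(KL)M=M(KL)$, whereas $S=(-1)^{k+n-m}$ exactly when ($m$ odd, $k$ even) or ($m,n$ even, $k$ odd), which together with $\widehat{KL}=(-1)^{k+n-m}KL$ gives $(KL)M=M\widehat{(KL)}$. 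I expect the main obstacle to be the constraint argument above: recognizing that nullity forces $C$ to be the complement of $E$ and hence a constant overlap. Once that is in place, the sign computation and the four-way parity check are entirely routine.
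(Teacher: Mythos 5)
Your proof is correct, and it takes a genuinely different route from the paper's. The paper never decomposes into individual blades: it forms the product $LM$, observes that it lies in the one-dimensional top-grade subspace $\C^{n}_{p,q,r}$, and then shuttles $K$ and $L$ around using the known (anti)commutation of the pseudoscalar with even and odd elements, treating the four parity cases by separate chains such as $(KL)M=(LM)K=(ML)K=M(KL)$. You instead reduce to basis blades, show that nonvanishing forces the index set of $M$ to be the complement of that of $L$ with the index set of $K$ contained in it, and extract the single uniform sign $(-1)^{(k+n-m)m+k}$ from the blade commutation rule, after which the four cases collapse into one parity check. Your version is more self-contained (it does not presuppose the commutation properties of $\C^{n}_{p,q,r}$) and makes explicit the reason the sign is independent of the particular blades, namely the constancy of the overlap $|C\cap D|=k$; the paper's version is shorter and reuses facts it needs elsewhere anyway (the description of the center and of $\C^{n}_{p,q,r}$). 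One minor point for your write-up: $e_De_Ee_C$ can also vanish when $E$ is disjoint from $C\cup D$ (namely when $D$ contains a null index, which then lies in $C$), but since the rule $e_Ae_B=(-1)^{|A||B|+|A\cap B|}e_Be_A$ is an exact identity independent of vanishing, and in that situation both orderings vanish anyway, your uniform sign still relates the two sides and nothing is lost.
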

\begin{proof}
     Suppose $m,k=0\mod{2}$. We have 
     \begin{eqnarray}\label{KLV_1}
         (KL)M=(LM)K=(ML)K=M(KL),
     \end{eqnarray}
     where we use that $LM\in\C^{n}_{p,q,r}$ commutes with any even element, $LM=ML$, and $LK=KL$, since $m$ and $k$ are even respectively. 
     If $m,k,n=1\mod{2}$, then we again have (\ref{KLV_1}), since $L$ is even and $LM\in\C^{n}_{p,q,r}\subset\Z_{p,q,r}$ is odd.

     Consider the case $m=1\mod{2}$ and $k=0\mod{2}$. 
     If $n$ is odd, then $L$ is even. We get (\ref{KLV_1}) again and obtain $(KL)M=M\widehat{(KL)}$, since both $K$ and $L$ are even.
     If $n$ is even, then $L$ is odd and
     \begin{eqnarray}
         (KL)M=(LM)K=(M\widehat{L})K=M(K\widehat{L})=M\widehat{(KL)}.
     \end{eqnarray}
     Finally, suppose $m,n=0\mod{2}$ and $k=1\mod{2}$. We obtain 
     \begin{eqnarray}
         (KL)M=(LM)\widehat{K}=(ML)\widehat{K}=M(\widehat{K}L)=M\widehat{(KL)},
     \end{eqnarray}
     since $L$ is even, $ML\in\C^{n}_{p,q,r}$ is even, and it anticommutes with all odd elements, including $K$.
\end{proof}

\begin{rem}\label{rem_zero}
    Note that 
    \begin{eqnarray}
        \Lambda^{k}_r \C^{m}_{p,q,r}\subseteq \C^{k+m}_{p,q,r},\quad k\geq 1;\qquad \Lambda^{0}_r \C^{m}_{p,q,r} =  \C^{m}_{p,q,r}.
    \end{eqnarray}
    Moreover, if at least one of $k$ and $m$ is even, then 
    \begin{eqnarray}
        X V = V X,\qquad \forall X\in \Lambda^{k}_r,\quad \forall V\in \C^{m}_{p,q,r}.
    \end{eqnarray}
    If both $k$ and $m$ are odd, then
    \begin{eqnarray}
        \widehat{X} V = V X,\qquad \forall X\in \Lambda^{k}_r,\quad \forall V\in \C^{m}_{p,q,r}.
    \end{eqnarray}
\end{rem}
We use Remark \ref{rem_zero} in the proof of Theorem \ref{theorem_cc_k_even}.
In Theorem \ref{theorem_cc_k_even}, we find the centralizers and twisted centralizers for any $m=1,\ldots,n$ in the case $r\neq n$. The case of the Grassmann algebra $\C_{0,0,n}$ is written out separately in Remark \ref{remark_00n}  for the sake of brevity in the theorem statement.

A few words about the notation in Theorem \ref{theorem_cc_k_even}. The spaces $\C^k_{p,q,0}$ and $\Lambd^k_r$, $k=0,\ldots,n$, are regarded as subspaces of $\C_{p,q,r}$. 
By $\{\C^k_{p,q,0}\Lambd^l_r\}$, we denote the subspace of $\C_{p,q,r}$ spanned by the elements of the form $ab$, where $a\in \C^k_{p,q,0}$ and $b\in\Lambd^l_r$. 

\begin{thm}\label{theorem_cc_k_even}
Consider the case $r\neq n$.
\begin{enumerate}
    \item 
   For an arbitrary even $m$, where $n\geq m\geq 2$, the centralizer has the form
    \begin{eqnarray}\label{ccm_even}
    \Z^m_{p,q,r}=\Lambda^{\leq n-m-1}_r\oplus \bigoplus_{k=1\mod{2}}^{m-3} \{\C^{k}_{p,q,0}\Lambda^{\geq n-(m-1)}_r\}\nonumber
    \\
    \oplus \bigoplus_{k=0\mod{2}}^{m-2}\{\C^{k}_{p,q,0}\Lambda^{\geq n-m}_r\} \oplus \C^{n}_{p,q,r},
    \end{eqnarray}
    and the twisted centralizer is equal to
    \begin{eqnarray}
        \!\!\!\!\!\!\!\!\!\!\!\!&&\check{\Z}^m_{p,q,r}\!\!=\!\!
        \langle \Lambda_r^{\leq n-m-1}\!\oplus\! \!\!\!\!\!\!\bigoplus_{k=1\mod{2}}^{m-3}\!\!\!\!\!\! \!\{\C^{k}_{p,q,0}\Lambda^{\geq n-(m-1)}_r\!\}\!\oplus\! \!\!\!\!\!\!\bigoplus_{k=0\mod{2}}^{m-2}\!\!\!\!\!\! \!\{\C^{k}_{p,q,0}\Lambda^{\geq n-m}_r\!\} \!\oplus\! \C^{n}_{p,q,r} \rangle_{(0)}\nonumber
        \\
        \!\!\!\!\!\!\!\!\!\!\!\!&&\qquad\qquad\qquad\oplus \langle
        \!\!\!\!\bigoplus_{k=0\mod{2}}^{m-2}\!\!\!\!\!\{\C^{k}_{p,q,0}\Lambda^{\geq n-(m-1)}_r\}
        \oplus \!\!\!\!\!\bigoplus_{k=1\mod{2}}^{m-1}\!\!\!\!\!\{\C^{k}_{p,q,0}\Lambda^{\geq n-m}_r\} \rangle_{(1)}.\label{ch_cc_m_even}
    \end{eqnarray}

    \item
    For an arbitrary odd $m$, where $n\geq m \geq 1$, we have
    \begin{eqnarray}
    \check{\Z}^m_{p,q,r}=\Lambda_r^{\leq n-m-1}\oplus\!\!\!\!\!\!\bigoplus_{k=1\mod{2}}^{m-2}\!\!\!\!\!\{\C^{k}_{p,q,0}\Lambda_r^{\geq n-(m-1)}\}\oplus\!\!\!\!\!\!\bigoplus_{k=0\mod{2}}^{m-1}\!\!\!\!\! \{\C^k_{p,q,0}\Lambda_r^{\geq n-m}\}\label{ch_cc_odd}
    \end{eqnarray}
    and
     \begin{eqnarray}
        \!\!\!\!\!\!\!\!\!\!\!\!\!\!&&\Z^m_{p,q,r} = \langle \Lambda_r^{\leq n-m-1}\oplus \!\!\!\!\bigoplus_{k=1\mod{2}}^{m-2}\!\!\!\!\{\C^{k}_{p,q,0}\Lambda^{\geq n-(m-1)}_r\} \oplus \!\!\!\!\bigoplus_{k=0\mod{2}}^{m-1}\!\!\!\!\{\C^{k}_{p,q,0}\Lambda^{\geq n-m}_r\} \rangle_{(0)} \nonumber
        \\
        \!\!\!\!\!\!\!\!\!\!\!\!\!\!\!\!\!\!\!\!\!\!\!\!\!\!&&\;\;\quad\quad\oplus \langle \!\!\!\!\!\bigoplus_{k=0\mod{2}}^{m-3}\!\!\!\!\!\{\C^{k}_{p,q,0}\Lambda^{\geq n-(m-1)}_r\}\oplus \!\!\!\!\!\!\bigoplus_{k=1\mod{2}}^{m-2}\!\!\!\!\!\!\{\C^k_{p,q,0}\Lambda^{\geq n-m}_r\}\oplus\C^{n}_{p,q,r}\rangle_{(1)}.\label{cc_odd}
     \end{eqnarray}
\end{enumerate}
\end{thm}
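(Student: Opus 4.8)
The plan is to reduce both membership conditions to a blade-by-blade computation and then to an elementary counting argument. I would write any $X\in\C_{p,q,r}$ in the basis (\ref{bas_dec}); for a fixed grade-$m$ basis blade $e_B$ the products $\{e_A e_B\}$ are, up to sign, pairwise distinct basis blades or zero, so $X$ commutes (respectively twisted-commutes) with every $V\in\C^m_{p,q,r}$ if and only if each blade $e_A$ occurring in $X$ does so with every such $e_B$. For a fixed blade, decompose $e_A=e_{A_0}e_{A_1}$ into its non-degenerate factor $e_{A_0}\in\C^{k}_{p,q,0}$ and its degenerate factor $e_{A_1}\in\Lambda^{l}_r$, and likewise $e_B=e_{B_0}e_{B_1}$. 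The product $e_A e_B$ vanishes (together with $e_B e_A$) exactly when $A_1\cap B_1\neq\emptyset$, and otherwise $e_A e_B=(-1)^{(k+l)m-|A_0\cap B_0|}e_B e_A$. Since $\widehat{e_A}=(-1)^{k+l}e_A$, everything reduces to the parity of the single integer $|A_0\cap B_0|$: the blade $e_A$ commutes with $e_B$ iff $|A_0\cap B_0|\equiv(k+l)m$, and twisted-commutes iff $|A_0\cap B_0|\equiv(k+l)(m+1)$, modulo $2$ (the case $A_1\cap B_1\neq\emptyset$ being satisfied automatically, and the two conditions coinciding on even blades, in agreement with (\ref{same_even})).

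Next I would establish the combinatorial heart of the argument. Fixing $A$, as $e_B$ runs over the grade-$m$ basis blades with $B_1\cap A_1=\emptyset$, the set $B$ ranges over all $m$-subsets of the $n-l$ generators lying outside $A_1$, of which exactly $k$ belong to $A_0$; hence $|A_0\cap B_0|$ attains precisely the contiguous integer values in $[\max(0,\,m-(n-l-k)),\ \min(k,m)]$. Consequently a blade $e_A$ commutes (or twisted-commutes) with the whole of $\C^m_{p,q,r}$ only in one of two situations: either no admissible $B$ exists at all, which happens exactly when $l\geq n-m+1$ and forces $e_A e_B=0=e_B e_A$ for every grade-$m$ blade, placing $e_A$ in both $\Z^m_{p,q,r}$ and $\check{\Z}^m_{p,q,r}$; or the interval collapses to one point, which occurs precisely when $k=0$ (the point $0$) or $l=n-m$ (the point $k$). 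Imposing the parity condition above on this single value, together with $m$ even or odd and the parity of the blade $k+l$, then dictates exactly which pairs $(k,l)$ survive and into which centralizer they fall.

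It then remains to repackage the surviving blades into the stated direct sums and to separate the even and odd projections. The blades with $l\geq n-m+1$, for every $k$, supply the high-degenerate parts $\Lambda^{\geq n-(m-1)}_r$ and, at the top, the summand $\C^n_{p,q,r}$; the blades with $l=n-m$ supply the boundary terms $\{\C^{k}_{p,q,0}\Lambda^{n-m}_r\}$; and the pure-degenerate blades with $k=0$, $l\leq n-m-1$ supply $\Lambda^{\leq n-m-1}_r$. By (\ref{same_even}) the even projections $\langle\,\cdot\,\rangle_{(0)}$ of $\Z^m_{p,q,r}$ and $\check{\Z}^m_{p,q,r}$ coincide, so the commuting condition on even blades need only be computed once; the odd projections $\langle\,\cdot\,\rangle_{(1)}$ differ by the parity shift between commuting and twisted-commuting, which is exactly the one-sided inclusion of odd projections at consecutive grades recorded in Lemmas \ref{zm_zm1} and \ref{zl_zm}, and I would use these to streamline the passage to the odd layer. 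Distributing the surviving $(k,l)$ according to the parity of $k$ produces the split into $k$-even and $k$-odd summands with the thresholds $n-m$ and $n-(m-1)$ appearing as in (\ref{ccm_even})--(\ref{cc_odd}).

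For the reverse inclusion ``$\supseteq$'' I would verify directly that each displayed summand lies in the appropriate centralizer. The pure- and high-degenerate contributions are handled by Remark \ref{rem_zero} (their products with grade-$m$ elements either vanish, through a shared nilpotent generator, or commute according to the parity of the degenerate degree), the top grade $\C^n_{p,q,r}$ by the description (\ref{Zpqr}) of the center, and --- the only computation of substance --- the boundary terms $\{\C^{k}_{p,q,0}\Lambda^{n-m}_r\}$ by Lemma \ref{lemma_KLV}, whose four cases are tailored precisely to record whether $KL$ with $L\in\Lambda^{n-m}_r$ commutes or twisted-commutes with $\C^m_{p,q,r}$ according to the parities of $m$, $k$, and $n$. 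I expect the main obstacle to be the bookkeeping in the ``$\subseteq$'' direction: one must keep the twisted condition's even-commute/odd-anticommute dichotomy aligned with the parity of $(k+l)m$ across the two cases $l=n-m$ and $l\geq n-m+1$, and check that the subcases which do depend on the parity of $n$ (for instance, whether a surviving $l=n-m$ blade is even or odd) nonetheless redistribute into the $n$-independent grade decompositions stated in the theorem, with the top-grade contributions correctly absorbed into $\C^n_{p,q,r}$.
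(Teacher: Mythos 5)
Your route is genuinely different from the paper's and, up to one slip noted below, it works. The paper never reduces to individual blades: it fixes a grade-$m$ basis element $e_{a_1\ldots a_m}$, splits $X$ into $2^m$ pieces $e_{a_{i_1}\ldots a_{i_k}}Y_{a_{i_1}\ldots a_{i_k}}$ according to which of $e_{a_1},\ldots,e_{a_m}$ occur, and shows that (twisted) commutation is equivalent to a system of $2^{m-1}$ equations of the form $(e_{a_1})^2\cdots(e_{a_k})^2e_{a_{k+1}\ldots a_m}Y_{a_1\ldots a_k}=0$ ($k$ odd), which it then analyses via invertibility of the squared generators; this directly yields (\ref{ccm_even}) and (\ref{ch_cc_odd}), after which (\ref{ch_cc_m_even}) and (\ref{cc_odd}) are obtained indirectly through Lemmas \ref{zm_zm1}, \ref{zl_zm}, \ref{lemma_KLV} and explicit counterexamples killing the extra summands. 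Your blade-by-blade reduction (valid, since for distinct $A_i$ the nonzero differences $e_{A_i}e_B-e_Be_{A_i}$ are supported on the distinct blades $e_{A_i\triangle B}$), the sign formula $e_Ae_B=(-1)^{(k+l)m-|A_0\cap B_0|}e_Be_A$, and the intermediate-value count for $|A_0\cap B_0|$ treat all four formulas uniformly through a single parity condition, which is arguably cleaner and makes the role of the thresholds $n-m$ and $n-(m-1)$ transparent; the price is heavier bookkeeping at the repackaging stage, where the paper instead leans on its auxiliary lemmas.

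One concrete gap: your claim that the interval $[\max(0,\,m-(n-l-k)),\,\min(k,m)]$ degenerates to a point ``precisely when $k=0$ or $l=n-m$'' is false. There is a third collapse case, $k+l=n$ (so $e_A=e_{1\ldots n}$ with $k>m$, single value $m$), which occurs whenever $r<n-m$. As written, your enumeration of surviving blades would then exclude the pseudoscalar, contradicting the term $\C^n_{p,q,r}$ in (\ref{ccm_even}) and in the odd part of (\ref{cc_odd}); you do reinstate $\C^n_{p,q,r}$ in the repackaging step, but you attribute it to the case $l\geq n-m+1$, which covers it only when $r\geq n-m+1$. Relatedly, in the ``$\supseteq$'' direction the appeal to (\ref{Zpqr}) only handles $\C^n_{p,q,r}\subseteq\Z^m_{p,q,r}$ for odd $n$; for even $n$ you need the direct observation that the top blade commutes with all even elements. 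Both points are repaired by adding the case $k+l=n$ to the collapse analysis, where the parity condition $m\equiv nm\pmod 2$ correctly reproduces when $\C^n_{p,q,r}$, respectively $\langle\C^n_{p,q,r}\rangle_{(1)}$, belongs to the centralizer in question.
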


\begin{proof}

Let us prove (\ref{ccm_even}). 
Namely, we prove that for any $X\in\C_{p,q,r}$ and even $m$, where $n\geq m\geq 2$, the condition $X e_{a_1\ldots a_m} = e_{a_1\ldots a_m} X$ for any basis element $e_{a_1\ldots a_m}\in\C^m_{p,q,r}$ is equivalent to the condition 
\begin{eqnarray*}
    X\!\in \Lambda^{\leq n-m-1}_r\oplus\! \!\!\!\!\!\!\bigoplus_{k=1\mod{2}}^{m-3}\!\!\!\!\!\! \{\C^{k}_{p,q,0}\Lambda^{\geq n-(m-1)}_r\}\oplus\!\!\!\!\!\!\!\bigoplus_{k=0\mod{2}}^{m-2}\!\!\!\!\!\!\{\C^{k}_{p,q,0}\Lambda^{\geq n-m}_r\} \!\oplus\! \C^{n}_{p,q,r}.
\end{eqnarray*}
For any fixed $a_1,\ldots, a_m$, we can always represent $X$ as a sum of $2^m$ summands:
\begin{eqnarray}
    \!\!\!\!\!\!X=Y  + e_{a_1}Y_{a_1}\! + \cdots + e_{a_m}Y_{a_m}\!\! + e_{a_1 a_2} Y_{a_1 a_2}\! + \cdots + e_{a_1\ldots a_m}Y_{a_1\ldots a_m},\label{X_form}
\end{eqnarray}
where $Y$, $Y_{a_1}$, \ldots, $Y_{a_1\ldots a_m}\in\C_{p,q,r}$ do not contain $e_{a_1},\ldots, e_{a_m}$. We get
\begin{eqnarray*}
    \!\!\!\!\!\!\!\!\!\!\!\!&&X e_{a_1\ldots a_m}=(Y + \cdots + e_{a_1\ldots a_m}Y_{a_1\ldots a_m}) e_{a_1\ldots a_m}
    \\
   \!\!\!\!\!\!\!\!\!\!\!\!&&= e_{a_1\ldots a_m}( \sum^{m}_{\substack{k=0\mod{2}}} e_{a_{i_1}\ldots a_{i_k}}Y_{a_{i_1}\ldots a_{i_k}}
    -\sum^{m-1}_{k=1\mod{2}} e_{a_{i_1}\ldots a_{i_k}}Y_{a_{i_1}\ldots a_{i_k}}),
\end{eqnarray*}
where $a_{i_1},\ldots,a_{i_k}\in\{a_1,\ldots,a_m\}$,
$a_{i_1}<\cdots<a_{i_k}$, the elements 
$e_A$ and $Y_A$ with the multi-indices of zero length are the identity element $e$ and $Y$ respectively, and the minus sign precedes summands with $e_{a_{i_1}\ldots a_{i_k}}\in\C^{(1)}_{p,q,r}$. 
We get that the condition $X e_{a_1\ldots a_m}=e_{a_1\ldots a_m}X$ is equivalent to
\begin{eqnarray}\label{eq_cc_p_1}
2e_{a_1\ldots a_m}(\sum^{m-1}_{k=1\mod{2}} e_{a_{i_1}\ldots a_{i_k}}Y_{a_{i_1}\ldots a_{i_k}})=0.
\end{eqnarray}
The equation (\ref{eq_cc_p_1}) is equivalent to the system of $2^{m-1}$ equations:
\begin{eqnarray}
     \!\!\!\!\!\!\!\!\!\!\!\!&(e_{a_1})^2 e_{a_2\ldots a_m}Y_{a_1}=0,\quad \ldots, \quad (e_{a_m})^2 e_{a_1\ldots a_{m-1}}Y_{a_m}=0,\label{eq_cc_p_2}
    \\
     \!\!\!\!\!\!&(e_{a_1})^2 (e_{a_2})^2 (e_{a_3})^2 e_{a_4\ldots a_m}Y_{a_1 a_2 a_3}\!=0,\;\ldots,\;(e_{a_2})^2\cdots(e_{a_m})^2e_{a_1}Y_{a_2\ldots a_m}\!=0.\nonumber\label{eq_cc_p_2_2}
\end{eqnarray}
Using $(e_{a_1})^2 e_{a_2\ldots a_m}Y_{a_1}=0$ (\ref{eq_cc_p_2}), we get that if $(e_{a_1})^2\neq0$, i.e. $a_1\in\{1,\ldots, p+q\}$, then $Y_{a_1}=0$, since $Y_{a_1}$ does not contain $e_{a_2},\ldots,e_{a_m}$. On the other hand, if each summand of $X$  either contains only the non-invertible generators, or contains  at least $1$ invertible generator and at the same time does not contain less than $m-1$ generators, then the equation $(e_{a_1})^2 e_{a_2\ldots a_m}Y_{a_1}=0$ is satisfied. Therefore, $(e_{a_1})^2 e_{a_2\ldots a_m}Y_{a_1}=0$ is satisfied if and only if  $X$ has no summands containing at least $1$ invertible generator and at the same time not containing $m-1$ or more of any generators. 
Similarly, for any other odd $k<m$, using $(e_{a_1})^2 (e_{a_2})^2 \ldots (e_{a_k})^2 e_{a_{k+1}\ldots a_m}Y_{a_1 a_2 \ldots a_k}=0$ (\ref{eq_cc_p_2}), we get $Y_{a_1 a_2 \ldots a_k}=0$ if $a_1,\ldots, a_k\in\{1,\ldots,p+q\}$. 
Moreover, the equation $(e_{a_1})^2 (e_{a_2})^2 \ldots (e_{a_k})^2 e_{a_{k+1}\ldots a_m}Y_{a_1 a_2 \ldots a_k}=0$ is satisfied if and only if $X$ has no summands containing at least $k$ invertible generators and at the same time not containing $m-k$ or more of any generators for any odd $k\leq m$. 
This implies that for 
\begin{eqnarray}\label{Xis}
X\in\C_{p,q,r}=\Lambda_r\oplus \{\C^{1}_{p,q,0}\Lambda_r\}\oplus\cdots \oplus\{\C^{p+q}_{p,q,0}\Lambda_r\},
\end{eqnarray}
we finally obtain
\begin{eqnarray}
    X\!\!\!\!&\in&\!\!\!\!\Lambda_r\oplus \{\C^{1}_{p,q,0}\Lambda^{\geq n-(m-1)}_r\}\oplus \{\C^2_{p,q,0}\Lambda^{\geq n-m}_r\}\oplus \{\C^3_{p,q,0}\Lambda^{\geq n-(m-1)}_r\} \nonumber
    \\
    &&\oplus \cdots\oplus \{\C^{m-3}_{p,q,0}\Lambda^{\geq n-(m-1)}_r\}\oplus \{\C^{m-2}_{p,q,0}\Lambda^{\geq n-m}_r\}\oplus \C^{n}_{p,q,r},
\end{eqnarray}
since for any fixed odd $k$ and even $k+1$, we have the following condition on $d$ for the subspaces $\{\C^{k}_{p,q,0}\Lambda^{d}_r\}$ and $\{\C^{k+1}_{p,q,0}\Lambda^{d}_r\}$ respectively: the number of not contained generators should be less than $m-k$, i.e. $n-(k+d)<m-k$, so $d\geq n-(m-1)$ for $\{\C^{k}_{p,q,0}\Lambda^{d}_r\}$, and $n-(k+1+d)<m-k$, thus, $d\geq n-m$ for $\{\C^{k+1}_{p,q,0}\Lambda^{d}_r\}$.
This completes the proof.

Let us prove (\ref{ch_cc_odd}). Namely, let us prove that for any $X\in\C_{p,q,r}$ and odd $m$, where $n\geq m\geq 1$, the condition  $\widehat{X}e_{a_1\ldots a_m}=e_{a_1 \ldots a_m}X$ for any basis element $e_{a_1 \ldots a_m}\in\C^{m}_{p,q,r}$ is equivalent to the condition
\begin{eqnarray}
    X\in\Lambda_r^{\leq n-m-1}\oplus\!\!\!\!\!\!\bigoplus_{k=1\mod{2}}^{m-2}\!\{\C^{k}_{p,q,0}\Lambda_r^{\geq n-(m-1)}\}\oplus\!\!\!\!\!\!\bigoplus_{k=0\mod{2}}^{m-1} \!\{\C^k_{p,q,0}\Lambda_r^{\geq n-m}\}.\label{ch_cc_odd_}
    \end{eqnarray}
For any fixed $a_1,\ldots,a_m$, we can represent $X$ as a sum of $2^m$ summands (\ref{X_form}), where $Y,\ldots,Y_{a_1\ldots a_m}\in\C_{p,q,r}$ do not contain $e_{a_1},\ldots,e_{a_m}$.
We obtain
\begin{eqnarray*}
    \!\!\!\!\!\!\!&&\widehat{X}e_{a_1\ldots a_m}=(\langle X\rangle_{(0)}-\langle X\rangle_{(1)})e_{a_1\ldots a_m}
    \\
    \!\!\!\!\!\!\!&&= e_{a_1\ldots a_m}(\sum^{m-1}_{k=0\mod{2}} e_{a_{i_1}\ldots a_{i_k}}Y_{a_{i_1}\ldots a_{i_k}}
    -\sum^{m}_{k=1\mod{2}} e_{a_{i_1}\ldots a_{i_k}}Y_{a_{i_1}\ldots a_{i_k}}),
\end{eqnarray*}
where  $a_{i_1},\ldots,a_{i_k}\in\{a_1,\ldots,a_m\}$, $a_{i_1}<\cdots<a_{i_k}$, the elements 
$e_A$ and $Y_A$ with the multi-indices of zero length are the identity element $e$ and $Y$ respectively, and the minus sign precedes summands with $e_{a_{i_1}\ldots a_{i_k}}\in\C^{(1)}_{p,q,r}$. 
We get that the equality $\widehat{X}e_{a_1\ldots a_m}=e_{a_1\ldots a_m}X$ is equivalent to the formula
\begin{eqnarray}\label{eq_cc_p_1_2}
2e_{a_1\ldots a_m}(\sum^{m}_{k=1\mod{2}} e_{a_{i_1}\ldots a_{i_k}}Y_{a_{i_1}\ldots a_{i_k}})=0.
\end{eqnarray}
Similar to how it is done for the formula (\ref{eq_cc_p_1}) above, from the formula (\ref{eq_cc_p_1_2}), we get that it
 is equivalent to the condition that $X$ has no summands containing at least $k$ invertible generators and at the same time not containing $m-k$ or more of any generators for any odd $k\leq m$. So, for $X\in\Lambda_r\oplus \{\C^{1}_{p,q,0}\Lambda_r\}\oplus\cdots \oplus\{\C^{p+q}_{p,q,0}\Lambda_r\}$, we get
\begin{eqnarray}
    X\!\!\!\!&\in&\!\!\!\!\Lambda_r\oplus \{\C^{1}_{p,q,0}\Lambda^{\geq n-(m-1)}_r\}\oplus \{\C^2_{p,q,0}\Lambda^{\geq n-m}_r\}\oplus \{\C^3_{p,q,0}\Lambda^{\geq n-(m-1)}_r\} \nonumber
    \\
    &&\oplus \cdots\oplus \{\C^{m-2}_{p,q,0}\Lambda^{\geq n-(m-1)}_r\}\oplus \{\C^{m-1}_{p,q,0}\Lambda^{\geq n-m}_r\},
\end{eqnarray}
since, similarly to the proof of (\ref{ccm_even}) above, for any odd $k$, for $\{\C^{k}_{p,q,0}\Lambda^{d}_r\}$, we have the condition $n-(k+d)<m-k$, i.e. $d\geq n-(m-1)$, and for any $\{\C^{k+1}_{p,q,0}\Lambda^{d}_r\}$, we get $n-(k+1+d)<m-k$, thus, $d\geq n-m$. This completes the proof.

Now we prove (\ref{ch_cc_m_even}). Suppose $m$ is even and $n\geq m\geq 2$. Since $\langle\check{\Z}^m_{p,q,r}\rangle_{(0)}=\langle \Z^m_{p,q,r}\rangle_{(0)}$ (\ref{same_even}), we only need to prove 
\begin{eqnarray}
    \!\!\langle\check{\Z}^m_{p,q,r}\rangle_{(1)}=\langle\!\!\!
        \bigoplus_{k=0\mod{2}}^{m-2}\!\!\!\{\C^{k}_{p,q,0}\Lambda^{\geq n-(m-1)}_r\}
        \oplus \!\!\!\!\bigoplus_{k=1\mod{2}}^{m-1}\!\!\!\{\C^{k}_{p,q,0}\Lambda^{\geq n-m}_r\} \rangle_{(1)}.\label{need_pr_1}
\end{eqnarray}
First, we prove that the right set is a subset of the left one in (\ref{need_pr_1}). 
We have $\{\C^{k}_{p,q,0}\Lambda^{n-m}_r\}\subseteq\check{\Z}^{m}_{p,q,r}$ for any odd $k$, where $m-1\geq k\geq 1$, and even $n$ by Lemma \ref{lemma_KLV}. 
Let us prove $\{\C^{k}_{p,q,0}\Lambda^{\geq n-(m-1)}_r\}\subseteq\Z^m_{p,q,r}$ for any even $k=0,\ldots,m-2$.
Note that $n-(m-1)\geq 1$; hence, we have $\{\Lambda^{\geq n-(m-1)}_r\C^{m}_{p,q,r}\}\subseteq \C^{\geq n-(m-1)+m}_{p,q,r}=\C^{n+1}_{p,q,r}=\{0\}$ and, similarly, $\{\C^{m}_{p,q,r}\Lambda^{\geq n-(m-1)}_r\}=\{0\}$ by Remark \ref{rem_zero}. Therefore, for any   $k=0,\ldots,n$, 
\begin{eqnarray}
    \{\C^{k}_{p,q,0} \Lambda^{\geq n-(m-1)}_r\} \C^{m}_{p,q,r}= \C^{m}_{p,q,r} \{\C^{k}_{p,q,0} \Lambda^{\geq n-(m-1)}_r\}=\{0\}.
\end{eqnarray}
Thus, 
$\{\C^{k}_{p,q,0}\Lambda^{\geq n-(m-1)}_r\}\subseteq\check{\Z}^m_{p,q,r}$. 
Let us prove that the left set is a subset of the right one in (\ref{need_pr_1}). 
Using $\langle\check{\Z}^m_{p,q,r}\rangle_{(1)}\subseteq\langle\check{\Z}^{m+1}_{p,q,r}\rangle_{(1)}$ for any even $m$ by Lemma \ref{zm_zm1} and applying (\ref{ch_cc_odd}) proved above,  we get
\begin{eqnarray*}
\langle\check{\Z}^m_{p,q,r}\rangle_{(1)}\!\subseteq\!\langle\Lambda^{\leq n-(m+1)}_r\!\oplus\!\!\!\!\!\!\!\bigoplus_{k=0\mod{2}}^{m}\!\!\!\!\!\!\! \{\C^k_{p,q,0}\Lambda_r^{\geq n-(m+1)}\}\!\oplus\!\!\!\!\!\!\!\bigoplus_{k=1\mod{2}}^{m-1}\!\!\!\!\!\!\!\{\C^{k}_{p,q,0}\Lambda_r^{\geq n-m}\}\rangle_{(1)}.
\end{eqnarray*}
Now let us show that the inclusion above implies the inclusion of the left set in the right one in (\ref{need_pr_1}).
The projection of $\langle\check{\Z}^m_{p,q,r}\rangle_{(1)}$ onto the subspace $\langle\Lambda^{\leq n-(m+1)}_r\rangle_{(1)}$ equals zero, since for any odd basis element $X\in\Lambda^{\leq n-(m+1)}_r$ there exists such an even basis element $V\in\C^{m}_{p,q,r}$ that $XV\neq0$ and $XV=VX$. For example, in the case $n=r=4$ and $m=2$, for $X=e_{1}$, we have $V=e_{23}$, and $e_{1}e_{23}=e_{23}e_{1}\neq e_{23}\widehat{e_1}$.
The projection of $\langle\check{\Z}^m_{p,q,r}\rangle_{(1)}$ onto $\langle\{\C^{k}_{p,q,0}\Lambda^{n-m}_r\}\rangle_{(1)}$ equals zero for any even $k$, $m$ and odd $n$ by Lemma \ref{lemma_KLV}. The projection of $\langle\check{\Z}^m_{p,q,r}\rangle_{(1)}$ onto $\langle\{\C^{k}_{p,q,0}\Lambda^{n-m-1}_r\}\rangle_{(1)}$ equals zero for any even $k$, where $m\geq k>0$, since for any basis elements $K=e_{a_1\ldots a_k}\in\C^{k}_{p,q,0}\subset\C^{(0)}_{p,q,r}$ and $L\in\Lambda^{n-m-1}_r\subseteq\C^{(1)}_{p,q,r}$, there exists such an even grade-$m$ element $M\in e_{a_1\ldots a_k}\C^{m-k}_{p,q,r}$ that $LM\neq0$, $LM=ML$, and $KM=MK$, so we get $(KL)M=M(KL)\neq M\widehat{(KL)}$, where we use Remark \ref{rem_zero}. For example, if $n=6$, $p=k=2$, and $r=m=4$, for $K=e_{12}\in\C^{2}_{2,0,0}$ and $L=e_{3}\in\Lambda^{1}_4$, there exists $M=e_{1245}\in e_{12}\C^{2}_{2,0,4}$, such that $(e_{12}e_3)e_{1245}=e_{1245}(e_{12}e_{3})\neq e_{1245}\widehat{(e_{12}e_{3})}$. Finally, the projection of $\check{\Z}^m_{p,q,r}$ onto $\{\C^{m}_{p,q,0}\Lambda^{\geq n-(m-1)}_r\}=\{0\}$ equals zero as well. Thus, we obtain (\ref{ch_cc_m_even}), and the proof is completed.

Finally, let us prove (\ref{cc_odd}). Suppose $m$ is odd and $n\geq m\geq 1$. We have $\langle \Z^m_{p,q,r}\rangle_{(0)}=\langle \check{\Z}^m_{p,q,r}\rangle_{(0)}$ (\ref{same_even}), so we only need to prove 
\begin{eqnarray}
    \!\!\!\langle\Z^m_{p,q,r}\rangle_{(1)}\!=\!\langle \!\!\!\!\!\!\bigoplus_{k=0\mod{2}}^{m-3}\!\!\!\!\!\!\{\C^{k}_{p,q,0}\Lambda^{\geq n-(m-1)}_r\!\}\!\oplus \!\!\!\!\!\!\!\bigoplus_{k=1\mod{2}}^{m-2}\!\!\!\!\!\!\{\C^k_{p,q,0}\Lambda^{\geq n-m}_r\!\}\!\oplus\!\C^{n}_{p,q,r}\rangle_{(1)}.\label{pr_f_1}
\end{eqnarray}
We obtain that the right set is a subset of the left one in (\ref{pr_f_1}), using $\{\C^{m}_{p,q,r}\Lambda^{\geq n-(m-1)}_r\}=\{0\}$, Lemma \ref{lemma_KLV}, and $\langle \C^{n}_{p,q,r}\rangle_{(1)}\subset\Z_{p,q,r}$.
Let us prove that the left set is a subset of the right one in (\ref{pr_f_1}). Using $\langle\Z^m_{p,q,r}\rangle_{(1)}\subseteq\langle\Z^{m+1}_{p,q,r}\rangle_{(1)}$ by Lemma \ref{zl_zm} and applying (\ref{ccm_even}) proved above, we get
\begin{eqnarray*}
    \langle\Z^m_{p,q,r}\rangle_{(1)}&\subseteq&
    \langle \Lambda^{\leq n-m-2}_r\oplus \bigoplus_{k=0\mod{2}}^{m-1}\{\C^{k}_{p,q,0}\Lambda^{\geq n-m-1}_r\}
    \\
    &&\oplus\bigoplus_{k=1\mod{2}}^{m-2}\{\C^{k}_{p,q,0}\Lambda^{\geq n-m}_r \}\oplus \C^{n}_{p,q,r}\rangle_{(1)}.
\end{eqnarray*}
The projection of $\langle\Z^m_{p,q,r}\rangle_{(1)}$ onto the subspace $\langle \Lambda^{\leq n-m-2}_r\rangle_{(1)}$ equals zero because for any odd $X\in\Lambda^{\leq n-m-2}_r$ there exists such an odd $V\in\C^{m}_{p,q,r}$ that $XV\neq0$ and $XV=-VX$. For example, if $n=r=4$ and $m=1$, for $X=e_{1}$, we have $V=e_2$, and $e_{1}e_{2}=-e_{2}e_{1}$. The projection of  $\langle\Z^m_{p,q,r}\rangle_{(1)}$ onto $\{\C^{k}_{p,q,0}\Lambda^{n-m}_r\}$ for any even  $k$ and odd $m$, $n$ is zero by Lemma \ref{lemma_KLV}. The projection of $\langle\Z^m_{p,q,r}\rangle_{(1)}$ onto $\langle\{\C^{k}_{p,q,0}\Lambda^{n-m-1}_r\}\rangle_{(1)}$ for any even $k\leq m-1$ equals zero because for any basis elements $K=e_{a_1\ldots a_k}\in\C^{k}_{p,q,0}\subseteq\C^{(0)}_{p,q,r}$ and $L\in\Lambda^{n-m-1}_r\subseteq\C^{(1)}_{p,q,r}$, there exists such an odd grade-$m$ element $M\in e_{a_1\ldots a_k}\C^{m-k}_{p,q,r}$ that $LM\neq0$, $LM=M\widehat{L}$, and $KM=MK$, therefore, $(KL)M=KM\widehat{L}=M(K\widehat{L})\neq M(KL)$.  For example, if $n=5$, $p=k=2$, and $r=m=3$, for $K=e_{12}\in\C^{2}_{2,0,0}$ and $L=e_{3}\in\Lambda^{1}_3$, we can take $M=e_{124}\in e_{12}\C^{1}_{2,0,3}$ and get $(e_{12}e_3)e_{124}=-e_{124}(e_{12}e_{3})$. Finally, $\{\C^{m-1}_{p,q,0}\Lambda^{\geq n-(m-1)}_r\}=\C^{n}_{p,q,r}$. Thus, we obtain  (\ref{cc_odd}), and the proof is completed.
\end{proof}

\begin{rem}\label{CC3CC4}
    Note that Theorem  \ref{theorem_cc_k_even} implies
\begin{eqnarray}
\!\!\!\!\!\!\!\!\!\!\!\!\!\!&\Z^{m}_{p,q,r}\subseteq\Z^{m+2}_{p,q,r},\quad \check{\Z}^m_{p,q,r}\subseteq\check{\Z}^{m+2}_{p,q,r},\qquad m=1,\ldots, n-2;\label{CC3CC4_2_1}
\\
\!\!\!\!\!\!\!\!\!\!\!\!\!\!& \check{\Z}^m_{p,q,r}\subseteq\Z^{m+1}_{p,q,r},\qquad \Z^m_{p,q,r}\subseteq\Z^{m+1}_{p,q,r},\qquad \mbox{$m$ is odd};\label{CC3CC4_2_2}
\\
\!\!\!\!\!\!\!\!\!\!\!\!\!\!& \check{\Z}^m_{p,q,r}\subseteq \Z^{m+2}_{p,q,r},\qquad\mbox{$m$ is even}.\label{CC3CC4_2_3}
\end{eqnarray}
Using (\ref{CC3CC4_2_1})--(\ref{CC3CC4_2_3}), we get
\begin{eqnarray}
\!\!\!\!\!\!\!\!&\Z^{m}_{p,q,r}\subseteq\Z^{4}_{p,q,r},\qquad \check{\Z}^{m}_{p,q,r}\subseteq\Z^{4}_{p,q,r},\qquad m=1,2,3.\label{CC3CC4_3}
\end{eqnarray}
If $r\leq n-(m+1)$, then
    \begin{eqnarray}
        &&\Z^m_{p,q,r}=\Z^1_{p,q,r},\quad \check{\Z}^m_{p,q,r}=\check{\Z}^1_{p,q,r},\quad \mbox{$m$ is odd};
        \\
        &&\Z^m_{p,q,r}=\Z^2_{p,q,r},\quad\check{\Z}^m_{p,q,r}=\check{\Z}^2_{p,q,r},\quad \mbox{$m$ is even},\;\; m\neq0.
    \end{eqnarray}
We use these relations to prove Theorems \ref{centralizers_qt} and \ref{centralizers_qt_ds}.
\end{rem}

\section{Particular Cases of Centralizers and Twisted Centralizers}\label{section_examples}

In this section, we consider the centralizers and twisted centralizers in the particular cases that are important for applications. In Remarks \ref{remark_CmCm_pq0} and \ref{remark_00n} below, we explicitly write out $\Z^m_{p,q,r}$ and $\check{\Z}^m_{p,q,r}$, $m=0,1,\ldots,n$, in the cases of the non-degenerate Clifford algebra $\C_{p,q,0}$ and the Grassmann algebra $\C_{0,0,n}$ respectively. Note that in these special cases, the centralizers and twisted centralizers have a much simpler form than in the general case of arbitrary $\C_{p,q,r}$ (Theorem \ref{theorem_cc_k_even}).

\begin{rem}\label{remark_CmCm_pq0} In the particular case of the non-degenerate algebra $\C_{p,q,0}$, we get from Theorem \ref{theorem_cc_k_even}
\begin{eqnarray*}
\Z^m_{p,q,0}=
\left\lbrace
\begin{array}{lll}
\C_{p,q,0},&& m=0;\quad m=n\quad \mbox{and}\quad m,n=1\mod{2};
\\
\C^{(0)}_{p,q,0},&& m=n\quad \mbox{and}\quad m,n=0\mod{2};
\\
\C^{0n}_{p,q,0},&&m\neq 0,n\quad \mbox{and}\quad m=0\mod{2} 
\\
&&\mbox{or}\quad m\neq n\quad \mbox{and}\quad m,n=1\mod{2};
\\
\C^{0},&& m=1\mod{2}\quad \mbox{and}\quad n=0\mod{2};
\end{array}
    \right.
\end{eqnarray*}
and
\begin{eqnarray*}
\check{\Z}^m_{p,q,0}\!=\!
\left\lbrace
\begin{array}{lll}
\!\!\C_{p,q,0},&&\!\! m=n,\quad m,n=0\mod{2};
\\
\!\!\C^{(0)}_{p,q,0},&&\!\! m=0;\quad m=n\quad \mbox{and}\quad m,n=1\mod{2};
\\
\!\!\C^{0n}_{p,q,0},&&\!\! m\neq 0,n\quad \mbox{and}\quad m,n=0\mod{2};
\\
\!\!\C^0,&&\!\!m\neq n\quad \mbox{and}\quad m=1\mod{2}
\\
&&\!\!\mbox{or}\quad m\neq 0,\quad  m=0\mod{2}, \quad \mbox{and}\quad n=1\mod{2}.
\end{array}
    \right.
\end{eqnarray*}
\end{rem}

\begin{rem}\label{remark_00n} In the particular case of the Grassmann algebra $\C_{0,0,n}=\Lambda_n$, we obtain from Theorem \ref{theorem_cc_k_even}
\begin{eqnarray*}
&\Z^0_{0,0,n}=\Lambda_n,\qquad \check{\Z}^0_{0,0,n}=\Lambda^{(0)}_{n};
\\
&\Z^m_{0,0,n}=\Lambda_n,\quad \check{\Z}^m_{0,0,n}=\Lambda^{(0)}_n \oplus\langle\Lambda^{\geq n-m+1}_n\rangle_{(1)},\quad m=0\mod{2},\quad m\neq0;
\\
&\Z^m_{0,0,n}=\Lambda^{(0)}_n\oplus \langle\Lambda^{\geq n-m+1}_n\rangle_{(1)},\quad \check{\Z}^m_{0,0,n}=\Lambda_n,\quad m=1\mod{2}.
\end{eqnarray*}

\end{rem}

In Remark \ref{cases_we_use} below, we explicitly write out the particular cases of Theorem~\ref{theorem_cc_k_even} and Remark \ref{remark_00n} in the case of small $k\leq 4$. We use these centralizers and twisted centralizers in Theorems \ref{centralizers_qt} and \ref{centralizers_qt_ds}.
Note that some of these centralizers and twisted centralizers are considered, for instance, in the papers \cite{cNN,br1,GenSpin,OnSomeLie,ICACGA}.
The cases $\Z^{2}_{p,q,r}$ (\ref{cc_2}) and $\check{\Z}^1_{p,q,r}$ (\ref{ch_cc_1}) are proved in detail, for example, in \cite{OnSomeLie}. The other cases are presented for the first time. 

\begin{rem}\label{cases_we_use}
We have:
\begin{eqnarray}
\!\!\!\!\!\!\!\!\!\!\!\!\!\!\!\!\!\!&&\Z^1_{p,q,r}=\Z_{p,q,r}=\left\lbrace
    \begin{array}{lll}
    \Lambda^{(0)}_r\oplus\C^{n}_{p,q,r},&&\mbox{$n$ is odd},
    \\
    \Lambda^{(0)}_r,&&\mbox{$n$ is even};
     \end{array}
    \right.\label{cc_1}
\\
\!\!\!\!\!\!\!\!\!\!\!\!\!\!\!\!\!\!&&\Z^2_{p,q,r}=
\left\lbrace
    \begin{array}{lll}
\Lambda_r\oplus\C^{n}_{p,q,r},&& r\neq n,
\\
\Lambda_r,&& r=n;
\end{array}
    \right.\label{cc_2}
\\
\!\!\!\!\!\!\!\!\!\!\!\!\!\!\!\!\!\!&&\Z^3_{p,q,r}\!=\!\left\lbrace
            \begin{array}{lll}\label{cc_3}
            \Lambda^{(0)}_r\!\oplus\!\Lambda^{n-2}_r\oplus \{\C^{1}_{p,q,0}(\Lambda^{n-3}_r\oplus\Lambda^{n-2}_r)\}
            \\
            \quad\oplus \{\C^{2}_{p,q,0}\Lambda^{n-3}_r\}\oplus\C^{n}_{p,q,r}, &&\mbox{$n$ is odd},
            \\
            \\
            \Lambda^{(0)}_r\!\oplus\!\Lambda^{n-1}_r\oplus \{\C^{1}_{p,q,0}\Lambda^{\geq n-2}_r\}\oplus \{\C^{2}_{p,q,0}\Lambda^{n-2}_r\}, &&\mbox{$n$ is even};
            \end{array}
            \right.
\\
\!\!\!\!\!\!\!\!\!\!\!\!\!\!\!\!\!\!&&\Z^4_{p,q,r}\!=\!
\left\lbrace
\begin{array}{lll}\label{cc_4}
\Lambda_r\oplus\{\C^{1}_{p,q,0}(\Lambda^{n-3}_r\oplus\Lambda^{n-2}_r)\}
\\
\quad\oplus \{\C^{2}_{p,q,0}(\Lambda^{n-4}_r\oplus\Lambda^{n-3}_r)\}\oplus\C^{n}_{p,q,r},&& r\neq n,
\\
\\
\Lambda_r, && r=n;
\end{array}
\right.
\end{eqnarray}
and:
\begin{eqnarray}
    \!\!\!\!\!\!\!\!\!\!\!\!\!\!\!\!\!\!&&\check{\Z}^1_{p,q,r}=\Lambda_r;\label{ch_cc_1}
    \\
    \!\!\!\!\!\!\!\!\!\!\!\!\!\!\!\!\!\!&&\check{\Z}^2_{p,q,r}=
    \left\lbrace
    \begin{array}{lll}\label{ch_cc_2}
    \Lambda^{(0)}_r\oplus\Lambda^{n}_r\oplus \{\C^{1}_{p,q,0}\Lambda^{n-1}_r\},&&\!\!\!\!\!\!\mbox{$n$ is odd},
    \\
    \Lambda^{(0)}_r\oplus\Lambda^{n-1}_r\oplus \{\C^{1}_{p,q,0}\Lambda^{n-2}_r\}\oplus\C^{n}_{p,q,r},&&\!\!\!\!\!\!\mbox{$n$ is even},\;\; r\neq n;
    \\
    \Lambda^{(0)}_r\oplus\Lambda^{n-1}_r,&&\!\!\!\!\!\!\mbox{$n$ is even},\;\; r=n;
    \end{array}
    \right.
    \\
    \!\!\!\!\!\!\!\!\!\!\!\!\!\!\!\!\!\!&&\check{\Z}^3_{p,q,r}=\Lambda_r\oplus \{\C^{1}_{p,q,0}\Lambda^{\geq n-2}_r\}\oplus \{\C^{2}_{p,q,0}\Lambda^{\geq n-3}_r\};\label{ch_cc_3}
    \\
    \!\!\!\!\!\!\!\!\!\!\!\!\!\!\!\!\!\!&&\check{\Z}^4_{p,q,r}=
    \left\lbrace
    \begin{array}{lll}
    \Lambda^{(0)}_r \oplus \Lambda^{n-2}_r\oplus\Lambda^n_r\oplus \{\C^1_{p,q,0}\Lambda^{\geq n-3}_r\}
    \\
    \quad \oplus \{\C^2_{p,q,0}\Lambda^{\geq n-3}_r\}\oplus\{\C^3_{p,q,0}\Lambda^{n-3}_r\},\!\!\!\!\!\!\!\!&& \mbox{$n$ is odd},
    \\
    \\
    \Lambda^{(0)}_r \oplus\Lambda^{n-3}_r\oplus\Lambda^{n-1}_r\oplus\{\C^3_{p,q,0}\Lambda^{n-4}_r\}
    \\
    \quad \oplus \{\C^{2}_{p,q,0}(\Lambda^{n-4}_r\oplus\Lambda^{n-3}_r)\}\oplus\C^{n}_{p,q,r}
    \\
    \quad\oplus
    \{\C^1_{p,q,0}(\Lambda^{n-4}_r \oplus \Lambda^{n-3}_r \oplus \Lambda^{n-2}_r)\},\!\!\!\!\!\!\!\!&& \mbox{$n$ is even},\;\; r\neq n,
    \\
    \\
    \Lambda^{(0)}_r \oplus\Lambda^{n-3}_r\oplus\Lambda^{n-1}_r,\!\!\!\!\!\!\!\!&& \mbox{$n$ is even},\;\; r=n.
    \end{array}
    \right.
\end{eqnarray}
\end{rem}

In Remark \ref{rem_ad}, we consider how the centralizers and the twisted centralizers are related to the kernels of the adjoint and twisted adjoint representations.
\begin{rem}\label{rem_ad}
    Note that 
\begin{eqnarray}
    \Z^{1\times}_{p,q,r}=\ker(\ad),\qquad \check{\Z}^{1\times}_{p,q,r}=\ker(\tilde{\ad})
\end{eqnarray}
and 
\begin{eqnarray}
    \ker(\ad)\subseteq\Z^{m\times}_{p,q,r},\qquad \ker(\check{\ad})=\Lambda^{(0)\times}_r\subseteq\check{\Z}^{m\times}_{p,q,r},\qquad  m=0,\ldots,n,\label{CC3CC4_2}
\end{eqnarray}
where $\ker(\ad)$, $\ker(\check{\ad})$, and $\ker(\tilde{\ad})$ are the kernels of the adjoint representation $\ad$ and the twisted adjoint  representations $\check{\ad}$ and $\tilde{\ad}$ respectively. 
The adjoint representation $\ad:\C^{\times}_{p,q,r}\rightarrow\Aut(\C_{p,q,r})$ acts on the group of all invertible elements as $T\mapsto\ad_T$, where 
\begin{eqnarray}\label{ar}
\ad_{T}(U)=TU T^{-1},\qquad U\in\C_{p,q,r},\qquad T\in\C^{\times}_{p,q,r}.
\end{eqnarray}
The twisted adjoint representation $\check{\ad}$ has been introduced in a particular case by Atiyah, Bott, and Shapiro in \cite{ABS}. The representation $\check{\ad}:\C^{\times}_{p,q,r}\rightarrow\Aut(\C_{p,q,r})$ acts on $\C^{\times}_{p,q,r}$ as $T\mapsto\check{\ad}_T$ with
\begin{eqnarray}
\check{\ad}_{T}(U)=\widehat{T}U T^{-1},\qquad U\in\C_{p,q,r},\qquad T\in\C^{\times}_{p,q,r}.\label{twa1}
\end{eqnarray}
The representation $\tilde{\ad}:\C^{\times}_{p,q,r}\rightarrow\Aut(\C_{p,q,r})$ acts on $\C^{\times}_{p,q,r}$  as $T\mapsto\tilde{\ad}_T$ with 
\begin{eqnarray}
\tilde{\ad}_{T}(U)=T\langle U\rangle_{(0)} T^{-1}+\widehat{T} \langle U\rangle_{(1)} T^{-1},\quad \forall U\in\C_{p,q,r},\quad T\in\C^{\times}_{p,q,r}.\label{twa22}
\end{eqnarray}
See the details about $\ad$, $\check{\ad}$, $\tilde{\ad}$, and their kernels, for example, in \cite{OnSomeLie}.
\end{rem}

\section{Centralizers and Twisted Centralizers of the Subspaces Determined by the Grade Involution and the Reversion}\label{section_centralizers_qt}

This section finds explicit forms of the centralizers and twisted centralizers of the subspaces $\C^{\overline{m}}_{p,q,r}$ (\ref{qtdef}), $m=0,1,2,3$, determined by the grade involution and the reversion  and their direct sums. In particular, we consider the centralizers and twisted centralizers of the even $\C^{(0)}_{p,q,r}=\C^{\overline{02}}_{p,q,r}$ and odd $\C^{(1)}_{p,q,r}=\C^{\overline{13}}_{p,q,r}$ subspaces.

Let us consider the centralizers $\Z^{\overline{m}}_{p,q,r}$ and twisted centralizers $\check{\Z}^{\overline{m}}_{p,q,r}$ of the subspaces $\C^{\overline{m}}_{p,q,r}$ (\ref{qtdef}), $m=0,1,2,3$, in $\C_{p,q,r}$:
\begin{eqnarray}
\!\!\!\!\!\!\!\!\!\!\!\!\Z^{\overline{m}}_{p,q,r}\!\!\!&:=&\!\!\!\{X\in\C_{p,q,r}:\quad X V = V X,\quad \forall V\in\C^{\overline{m}}_{p,q,r}\},\quad m=0,1,2,3,\label{def_CC_ov}
\\
\!\!\!\!\!\!\!\!\!\!\!\!\check{\Z}^{\overline{m}}_{p,q,r}\!\!\!&:=&\!\!\!\{X\in\C_{p,q,r}:\quad \widehat{X} V = V X,\quad \forall V\in\C^{\overline{m}}_{p,q,r}\},\quad m=0,1,2,3.\label{def_chCC_ov}
\end{eqnarray}
In Theorem \ref{centralizers_qt}, we prove that $\Z^{\overline{m}}_{p,q,r}$ and $\check{\Z}^{\overline{m}}_{p,q,r}$ coincide with some of the centralizers $\Z^m_{p,q,r}$ and the twisted centralizers $\check{\Z}^m_{p,q,r}$  of the subspaces of fixed grades, which are considered in Sections \ref{section_centralizers} and \ref{section_examples}.

\begin{thm}\label{centralizers_qt}
We have
\begin{eqnarray}
&\Z^{\overline{m}}_{p,q,r}=\Z^{m}_{p,q,r},\quad\check{\Z}^{\overline{m}}_{p,q,r}=\check{\Z}^{m}_{p,q,r},\quad m=1,2,3;
\\
&\Z^{\overline{0}}_{p,q,r}=\Z^{4}_{p,q,r},\quad \check{\Z}^{\overline{0}}_{p,q,r}=\langle\Z^{4}_{p,q,r}\rangle_{(0)}.\label{f_f3}
\end{eqnarray}
\end{thm}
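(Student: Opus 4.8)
The plan is to use the description of each quaternion-type subspace (\ref{qtdef}) as a direct sum of fixed-grade subspaces. A grade-$j$ element satisfies $\widehat{U}=(-1)^jU$ and $\widetilde{U}=(-1)^{j(j-1)/2}U$, and comparing these signs with (\ref{qtdef}) shows that $\C^j_{p,q,r}\subseteq\C^{\overline{m}}_{p,q,r}$ exactly when $j\equiv m$ modulo $4$. Hence
\[
\C^{\overline{m}}_{p,q,r}=\C^{m}_{p,q,r}\oplus\C^{m+4}_{p,q,r}\oplus\C^{m+8}_{p,q,r}\oplus\cdots,\qquad m=0,1,2,3,
\]
a finite sum since $\C^j_{p,q,r}=\{0\}$ for $j>n$. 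In particular $\C^m_{p,q,r}\subseteq\C^{\overline{m}}_{p,q,r}$, and because both the centralizer (\ref{def_CC_ov}) and the twisted centralizer (\ref{def_chCC_ov}) reverse inclusions of their defining set, I immediately obtain $\Z^{\overline{m}}_{p,q,r}\subseteq\Z^{m}_{p,q,r}$ and $\check{\Z}^{\overline{m}}_{p,q,r}\subseteq\check{\Z}^{m}_{p,q,r}$ for all $m$. The task is thus to establish the reverse inclusions.

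For $m=1,2,3$ I would derive the reverse inclusions from the chain relations already recorded in (\ref{CC3CC4_2_1}), namely $\Z^{j}_{p,q,r}\subseteq\Z^{j+2}_{p,q,r}$ and $\check{\Z}^{j}_{p,q,r}\subseteq\check{\Z}^{j+2}_{p,q,r}$. Applying each of these two grades at a time gives $\Z^{m}_{p,q,r}\subseteq\Z^{m+4}_{p,q,r}\subseteq\Z^{m+8}_{p,q,r}\subseteq\cdots$, and likewise for $\check{\Z}$. Concretely, any $X$ that (twisted-)commutes with $\C^{m}_{p,q,r}$ then (twisted-)commutes with every summand $\C^{m+4k}_{p,q,r}$, and by linearity with their direct sum $\C^{\overline{m}}_{p,q,r}$. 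This yields $\Z^{m}_{p,q,r}\subseteq\Z^{\overline{m}}_{p,q,r}$ and $\check{\Z}^{m}_{p,q,r}\subseteq\check{\Z}^{\overline{m}}_{p,q,r}$, completing the equalities for $m=1,2,3$.

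The case $m=0$ requires care because grade $0$ is exceptional: $\C^0=\F e$ is central, so $\Z^0_{p,q,r}=\C_{p,q,r}$ by (\ref{chCC_0}) and one cannot iterate (\ref{CC3CC4_2_1}) starting from grade $0$. Instead I would note that $\C^{\overline{0}}_{p,q,r}=\C^0\oplus\C^4_{p,q,r}\oplus\C^8_{p,q,r}\oplus\cdots$ has $\C^4_{p,q,r}$ as its lowest nontrivial summand. The inclusion $\C^4_{p,q,r}\subseteq\C^{\overline{0}}_{p,q,r}$ gives $\Z^{\overline{0}}_{p,q,r}\subseteq\Z^4_{p,q,r}$, while iterating (\ref{CC3CC4_2_1}) from grade $4$ upward, together with the centrality of $\C^0$, gives the reverse inclusion; hence $\Z^{\overline{0}}_{p,q,r}=\Z^4_{p,q,r}$. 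For the twisted centralizer, the extra observation is that evaluating the defining relation $\widehat{X}V=VX$ at $V=e\in\C^0$ forces $\widehat{X}=X$, i.e. $X\in\C^{(0)}_{p,q,r}$, and for such even $X$ the twisted relation $\widehat{X}V=VX$ reduces to the ordinary one $XV=VX$. Therefore $\check{\Z}^{\overline{0}}_{p,q,r}=\C^{(0)}_{p,q,r}\cap\Z^{\overline{0}}_{p,q,r}=\langle\Z^{\overline{0}}_{p,q,r}\rangle_{(0)}=\langle\Z^4_{p,q,r}\rangle_{(0)}$, using (\ref{H_even}) and the equality just obtained.

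The argument is essentially formal once the grade decomposition of $\C^{\overline{m}}_{p,q,r}$ and the step-by-two inclusions (\ref{CC3CC4_2_1}) are available. The one place I would be most careful is the special behaviour of grade $0$: it is what shifts the relevant centralizer in the $m=0$ case from $\Z^0_{p,q,r}$ (the whole algebra) to $\Z^4_{p,q,r}$, and in the twisted setting it is what forces evenness and collapses $\check{\Z}^{\overline{0}}_{p,q,r}$ onto the even part of $\Z^4_{p,q,r}$.
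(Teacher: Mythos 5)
Your proof is correct, and for the hard (reverse) inclusions it takes a somewhat different route from the paper's. Both arguments obtain the easy inclusions $\Z^{\overline{m}}_{p,q,r}\subseteq\Z^{m}_{p,q,r}$, $\check{\Z}^{\overline{m}}_{p,q,r}\subseteq\check{\Z}^{m}_{p,q,r}$ (resp.\ $\Z^{\overline{0}}_{p,q,r}\subseteq\Z^{4}_{p,q,r}$) from $\C^{m}_{p,q,r}\subseteq\C^{\overline{m}}_{p,q,r}$ and $\C^{4}_{p,q,r}\subseteq\C^{\overline{0}}_{p,q,r}$, and both exploit the mod-$4$ grade structure of $\C^{\overline{m}}_{p,q,r}$ together with Remark \ref{CC3CC4}. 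The difference lies in the mechanism for the reverse direction: the paper factors each basis element of $\C^{\overline{m}}_{p,q,r}$ as a product of one basis element of $\C^{m}_{p,q,r}$ with basis elements of $\C^{4}_{p,q,r}$ and invokes (\ref{CC3CC4_3}), i.e.\ $\Z^{m}_{p,q,r},\check{\Z}^{m}_{p,q,r}\subseteq\Z^{4}_{p,q,r}$, so that an element which (twisted-)commutes with $\C^{m}_{p,q,r}$ and genuinely commutes with $\C^{4}_{p,q,r}$ (twisted-)commutes with all such products; you instead write $\C^{\overline{m}}_{p,q,r}=\bigoplus_{k\geq0}\C^{m+4k}_{p,q,r}$ and iterate the step-by-two inclusions (\ref{CC3CC4_2_1}) to get $\Z^{m}_{p,q,r}\subseteq\Z^{m+4k}_{p,q,r}$ and $\check{\Z}^{m}_{p,q,r}\subseteq\check{\Z}^{m+4k}_{p,q,r}$ for every $k$, and then intersect over the graded summands. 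Your route avoids the small product computation but needs the chain to remain valid at the top grades, which is fine since $\Z^{j}_{p,q,r}=\check{\Z}^{j}_{p,q,r}=\C_{p,q,r}$ for $j>n$ makes the inclusions trivial there. Your treatment of $\check{\Z}^{\overline{0}}_{p,q,r}$ (evaluating the defining relation at $V=e$ to force evenness, after which the twisted condition collapses to the ordinary one) is in substance the paper's use of $\check{\Z}^{0}_{p,q,r}=\C^{(0)}_{p,q,r}$ from (\ref{chCC_0}) together with (\ref{same_even}), just phrased more directly. Both approaches ultimately rest on the same input, namely Theorem \ref{theorem_cc_k_even} via Remark \ref{CC3CC4}.
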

The centralizers $\Z^m_{p,q,r}$ and twisted centralizers  $\check{\Z}^m_{p,q,r}$, $m=1,2,3,4$, are written out explicitly in Remark \ref{cases_we_use} for the readers' convenience. In the formula (\ref{f_f3}), we have
\begin{eqnarray*}
\langle\Z^{4}_{p,q,r}\rangle_{(0)} \!=\! 
\left\lbrace
    \begin{array}{lll}
\!\!\!\!\Lambda^{(0)}_r\!\oplus \!\{\C^{1}_{p,q,0}\Lambda^{n-2}_r\}\!\oplus 
\!\{\C^{2}_{p,q,0}\Lambda^{n-3}_r\},\; \mbox{$n$ is odd or $r=n$,}
\\
\!\!\!\!\Lambda^{(0)}_r\!\oplus \!\{\C^{1}_{p,q,0}\Lambda^{n-3}_r\}\!\oplus \!\{\C^{2}_{p,q,0}\Lambda^{n-4}_r\}\!\oplus\!\C^{n}_{p,q,r},\;\mbox{$n$ is even, $r\neq n$}.
\end{array}
    \right.
\end{eqnarray*}

\begin{proof}
The inclusions $\Z^{\overline{m}}_{p,q,r}\subseteq\Z^{m}_{p,q,r}$,  $\check{\Z}^{\overline{m}}_{p,q,r}\subseteq\check{\Z}^m_{p,q,r}$, $m=1,2,3$,  and $\Z^{\overline{0}}_{p,q,r}\subseteq\Z^{4}_{p,q,r}$ follow from $\C^{k}_{p,q,r}\subseteq\C^{\overline{k}}_{p,q,r}$, $k=1,2,3$, and $\C^{4}_{p,q,r}\subset\C^{\overline{0}}_{p,q,r}$ respectively. We get $\check{\Z}^{\overline{0}}_{p,q,r}\subseteq\check{\Z}^4_{p,q,r}\cap\check{\Z}^0_{p,q,r}=\Z^4_{p,q,r}\cap\C^{(0)}_{p,q,r}$, using $\C^{04}_{p,q,r}\subseteq\C^{\overline{0}}_{p,q,r}$ and $\check{\Z}^0_{p,q,r}=\C^{(0)}_{p,q,r}$ (\ref{chCC_0}).

Let us prove $\Z^{m}_{p,q,r}\subseteq\Z^{\overline{m}}_{p,q,r}$ and $\check{\Z}^m_{p,q,r}\subseteq\check{\Z}^{\overline{m}}_{p,q,r}$, $m=1,2,3$. Any basis element of $\C^{\overline{k}}_{p,q,r}$, $k=1,2,3$,  can be represented as a product of one basis element of $\C^{k}_{p,q,r}$ and basis elements of $\C^{4}_{p,q,r}$. Since $\Z^{m}_{p,q,r}\subseteq \Z^4_{p,q,r}$ and $\check{\Z}^m_{p,q,r}\subseteq\Z^4_{p,q,r}$ by the statement (\ref{CC3CC4_3}) of Remark \ref{CC3CC4}, we get $\Z^{m}_{p,q,r}\subseteq\Z^{\overline{m}}_{p,q,r}$ and $\check{\Z}^m_{p,q,r}\subseteq\check{\Z}^{\overline{m}}_{p,q,r}$.

We obtain $\Z^4_{p,q,r}\subseteq\Z^{\overline{0}}_{p,q,r}$ and $\Z^4_{p,q,r}\cap\C^{(0)}_{p,q,r}\subseteq\check{\Z}^{\overline{0}}_{p,q,r}$ because $\Z^4_{p,q,r}\subseteq\Z^0_{p,q,r}$ and $\Z^4_{p,q,r}\cap\C^{(0)}_{p,q,r}\subseteq\C^{(0)}_{p,q,r}=\check{\Z}^0_{p,q,r}$ (\ref{chCC_0}) respectively and any basis element of $\C^{\overline{0}}_{p,q,r}\setminus\C^0$ can be represented as a product of basis elements of $\C^{4}_{p,q,r}$. 
\end{proof}

Let us denote by $\Z^{\overline{km}}_{p,q,r}$ and $\check{\Z}^{\overline{km}}_{p,q,r}$, $k,m=0,1,2,3$, the centralizers and the twisted centralizers respectively of the direct sums of the subspaces $\C^{\overline{m}}_{p,q,r}$~(\ref{qtdef}) in $\C_{p,q,r}$:
\begin{eqnarray*}
\Z^{\overline{km}}_{p,q,r}&:=&\Z^{\overline{k}}_{p,q,r}\cap\Z^{\overline{m}}_{p,q,r}=\{X\in\C_{p,q,r}:\quad X V = V X,\quad \forall V\in\C^{\overline{km}}_{p,q,r}\},
\\
\check{\Z}^{\overline{km}}_{p,q,r}&:=&\check{\Z}^{\overline{k}}_{p,q,r}\cap\check{\Z}^{\overline{m}}_{p,q,r}=\{X\in\C_{p,q,r}:\quad \widehat{X} V = V X,\quad \forall V\in\C^{\overline{km}}_{p,q,r}\}.
\end{eqnarray*}
Note that $\Z^{\overline{02}}_{p,q,r}$, $\check{\Z}^{\overline{02}}_{p,q,r}$, $\Z^{\overline{13}}_{p,q,r}$,  and $\check{\Z}^{\overline{13}}_{p,q,r}$ are the centralizers and the twisted centralizers of the even $\C^{(0)}_{p,q,r}$ and odd $\C^{(1)}_{p,q,r}$ subspaces respectively.
In Theorem \ref{centralizers_qt_ds}, we find explicit forms of $\Z^{\overline{km}}_{p,q,r}$ and $\check{\Z}^{\overline{km}}_{p,q,r}$, $k,m=0,1,2,3$.

\begin{thm}\label{centralizers_qt_ds}
We have
\begin{eqnarray}
    &\!\!\!\!\!\!\Z^{\overline{01}}_{p,q,r}=\Z^{\overline{12}}_{p,q,r}=\Z^{\overline{13}}_{p,q,r}=\Z_{p,q,r},\quad \Z^{\overline{23}}_{p,q,r}=\Z^2_{p,q,r}\cap\Z^3_{p,q,r},\label{centralizers_qt_ds_1}
    \\
    &\Z^{\overline{02}}_{p,q,r}=\Z^2_{p,q,r},\quad \Z^{\overline{03}}_{p,q,r}=\Z^3_{p,q,r},\label{centralizers_qt_ds_2_00}
    \\
    &\!\!\!\!\!\! \check{\Z}^{\overline{12}}_{p,q,r}=\check{\Z}^1_{p,q,r}\cap\check{\Z}^2_{p,q,r},\quad \check{\Z}^{\overline{23}}_{p,q,r}=\check{\Z}^2_{p,q,r}\cap\check{\Z}^3_{p,q,r},\quad \check{\Z}^{\overline{13}}_{p,q,r}=\check{\Z}^1_{p,q,r}, \label{centralizers_qt_ds_2_0}
    \\
    &\check{\Z}^{\overline{01}}_{p,q,r}=\langle{\Z}^1_{p,q,r}\rangle_{(0)},\quad \check{\Z}^{\overline{02}}_{p,q,r}=\langle{\Z}^2_{p,q,r}\rangle_{(0)},\quad\check{\Z}^{\overline{03}}_{p,q,r}=\langle\Z^3_{p,q,r}\rangle_{(0)}.\label{centralizers_qt_ds_2}
\end{eqnarray}
\end{thm}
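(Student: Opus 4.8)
The plan is to reduce every assertion of the theorem to an intersection of the already-computed objects $\Z^m_{p,q,r}$ and $\check{\Z}^m_{p,q,r}$, $m=1,2,3,4$, and then to collapse that intersection using the chain of inclusions recorded in Remark \ref{CC3CC4}. By the very definitions of $\Z^{\overline{km}}_{p,q,r}=\Z^{\overline{k}}_{p,q,r}\cap\Z^{\overline{m}}_{p,q,r}$ and $\check{\Z}^{\overline{km}}_{p,q,r}=\check{\Z}^{\overline{k}}_{p,q,r}\cap\check{\Z}^{\overline{m}}_{p,q,r}$, applying Theorem \ref{centralizers_qt} replaces each factor by one of $\Z^1_{p,q,r},\Z^2_{p,q,r},\Z^3_{p,q,r},\Z^4_{p,q,r}$ (for the centralizers, with $\overline{0}\mapsto 4$) or by one of $\check{\Z}^1_{p,q,r},\check{\Z}^2_{p,q,r},\check{\Z}^3_{p,q,r},\langle\Z^4_{p,q,r}\rangle_{(0)}$ (for the twisted ones). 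From there the whole argument is a bookkeeping exercise tracking which factor is absorbed by which.

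For the pure centralizer identities (\ref{centralizers_qt_ds_1})--(\ref{centralizers_qt_ds_2_00}) I would note that each reduces to $\Z^a_{p,q,r}\cap\Z^b_{p,q,r}$ with $a<b$, where the smaller-index factor always lies inside the larger: $\Z^1_{p,q,r}\subseteq\Z^2_{p,q,r}$ by (\ref{CC3CC4_2_2}), $\Z^1_{p,q,r}\subseteq\Z^3_{p,q,r}$ by (\ref{CC3CC4_2_1}), and $\Z^m_{p,q,r}\subseteq\Z^4_{p,q,r}$ for $m=1,2,3$ by (\ref{CC3CC4_3}). Hence $\Z^{\overline{01}}_{p,q,r}=\Z^4_{p,q,r}\cap\Z^1_{p,q,r}=\Z^1_{p,q,r}$, $\Z^{\overline{12}}_{p,q,r}=\Z^1_{p,q,r}\cap\Z^2_{p,q,r}=\Z^1_{p,q,r}$, and $\Z^{\overline{13}}_{p,q,r}=\Z^1_{p,q,r}\cap\Z^3_{p,q,r}=\Z^1_{p,q,r}$, all equal to $\Z_{p,q,r}$ by (\ref{cc_1}); likewise $\Z^{\overline{02}}_{p,q,r}=\Z^4_{p,q,r}\cap\Z^2_{p,q,r}=\Z^2_{p,q,r}$ and $\Z^{\overline{03}}_{p,q,r}=\Z^4_{p,q,r}\cap\Z^3_{p,q,r}=\Z^3_{p,q,r}$. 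The lone case $\Z^{\overline{23}}_{p,q,r}=\Z^2_{p,q,r}\cap\Z^3_{p,q,r}$ admits no further collapse and is left as stated. The twisted identities $\check{\Z}^{\overline{13}}_{p,q,r}=\check{\Z}^1_{p,q,r}$, $\check{\Z}^{\overline{12}}_{p,q,r}=\check{\Z}^1_{p,q,r}\cap\check{\Z}^2_{p,q,r}$, and $\check{\Z}^{\overline{23}}_{p,q,r}=\check{\Z}^2_{p,q,r}\cap\check{\Z}^3_{p,q,r}$ in (\ref{centralizers_qt_ds_2_0}) go the same way, using $\check{\Z}^1_{p,q,r}\subseteq\check{\Z}^3_{p,q,r}$ from (\ref{CC3CC4_2_1}).

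The cases I expect to be the crux are the three twisted centralizers of sums involving $\C^{\overline{0}}_{p,q,r}$, that is (\ref{centralizers_qt_ds_2}), since here an even-projection has to be handled. Theorem \ref{centralizers_qt} gives $\check{\Z}^{\overline{0}}_{p,q,r}=\langle\Z^4_{p,q,r}\rangle_{(0)}=\Z^4_{p,q,r}\cap\C^{(0)}_{p,q,r}$, so that for $m=1,2,3$ one has $\check{\Z}^{\overline{0m}}_{p,q,r}=\Z^4_{p,q,r}\cap\C^{(0)}_{p,q,r}\cap\check{\Z}^m_{p,q,r}$. The key observation is that $\check{\Z}^m_{p,q,r}\subseteq\Z^4_{p,q,r}$ for $m=1,2,3$ by (\ref{CC3CC4_3}), so the factor $\Z^4_{p,q,r}$ is absorbed and the intersection drops to $\C^{(0)}_{p,q,r}\cap\check{\Z}^m_{p,q,r}=\langle\check{\Z}^m_{p,q,r}\rangle_{(0)}$; the coincidence of even projections $\langle\check{\Z}^m_{p,q,r}\rangle_{(0)}=\langle\Z^m_{p,q,r}\rangle_{(0)}$ from (\ref{same_even}) then rewrites this as $\langle\Z^m_{p,q,r}\rangle_{(0)}$, yielding (\ref{centralizers_qt_ds_2}). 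The hard part is really only a matter of care: one must keep the $\C^{(0)}_{p,q,r}$-constraint throughout and must invoke each inclusion of Remark \ref{CC3CC4} under exactly the parity hypothesis for which it was stated, since the entire proof rests on those inclusions being applied in the correct direction.
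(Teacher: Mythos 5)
Your proposal is correct and follows essentially the same route as the paper's own proof: reduce each $\Z^{\overline{km}}_{p,q,r}$ and $\check{\Z}^{\overline{km}}_{p,q,r}$ to an intersection of the fixed-grade (twisted) centralizers via Theorem \ref{centralizers_qt}, collapse it using the inclusions of Remark \ref{CC3CC4}, and handle the $\check{\Z}^{\overline{0k}}_{p,q,r}$ cases by absorbing $\Z^4_{p,q,r}$ and invoking (\ref{same_even}) to pass from $\langle\check{\Z}^k_{p,q,r}\rangle_{(0)}$ to $\langle\Z^k_{p,q,r}\rangle_{(0)}$. No gaps.
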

The centralizers $\Z^2_{p,q,r}$, $\Z^3_{p,q,r}$, $\Z_{p,q,r}=\Z^1_{p,q,r}$ and the twisted centralizer $\check{\Z}^1_{p,q,r}$ are written out explicitly in Remark \ref{cases_we_use}.
In the formulas (\ref{centralizers_qt_ds_1})--(\ref{centralizers_qt_ds_2}), we have
\begin{eqnarray*}
     \!\!\!\!\!\!&\Z^2_{p,q,r}\cap\Z^3_{p,q,r}=
     \left\lbrace
    \begin{array}{lll}
    \!\!\!\Lambda^{(0)}_r\oplus \Lambda^{n-2}_r\oplus\C^{n}_{p,q,r}, \!\!\!\!\!\!\!\!\!&&\mbox{$n$ is odd},
    \\
    \!\!\!\Lambda^{(0)}_r\oplus\Lambda^{n-1}_r\oplus\{\C^{1}_{p,q,0}\Lambda^{n-1}_r\}\oplus\{\C^{2}_{p,q,0}\Lambda^{n-2}_r\},\!\!\!\!\!\!\!\!\!&&\mbox{$n$ is even};
    \end{array}
    \right.
     \\
     \!\!\!\!\!\!&\check{\Z}^1_{p,q,r}\cap\check{\Z}^2_{p,q,r} =
      \left\lbrace
    \begin{array}{lll}
    \Lambda^{(0)}_r\oplus\Lambda^n_r,&&\mbox{$n$ is odd},
    \\
    \Lambda^{(0)}_r\oplus\Lambda^{n-1}_r,&&\mbox{$n$ is even};
    \end{array}
    \right.
     \end{eqnarray*}
    \begin{eqnarray*}
     \!\!\!\!\!\!&\check{\Z}^2_{p,q,r}\!\cap\check{\Z}^3_{p,q,r} \!=\!
     \left\lbrace
    \begin{array}{lll}
    \!\!\!\Lambda^{(0)}_r\!\oplus\!\Lambda^{n}_r\!\oplus\! \{\C^{1}_{p,q,0}\Lambda^{n-1}_r\},\!\!\!\!\!\!\!\!\!&&\mbox{$n$ is odd},
    \\
    \!\!\!\Lambda^{(0)}_r\!\oplus\!\Lambda^{n-1}_r\!\oplus\! \{\C^{1}_{p,q,0}\Lambda^{\geq n-2}_r\}\!\oplus\!\{\C^{2}_{p,q,0}\Lambda^{n-2}_r\},\!\!\!\!\!\!\!\!\!&&\mbox{$n$ is even};
    \end{array}
    \right.
     \\
    \!\!\! \!\!\!&\langle \Z^1_{p,q,r}\rangle_{(0)}=\Lambda^{(0)}_r,\quad 
     \langle \Z^2_{p,q,r}\rangle_{(0)} =
     \left\lbrace
    \begin{array}{lll}
    \!\!\!\Lambda^{(0)}_r,\!\!\!\!\!\!\!\!&&\mbox{$n$ is odd};\;\; \mbox{$n$ is even, $r=n$},
    \\
    \!\!\!\Lambda^{(0)}_r\oplus\C^{n}_{p,q,r},\!\!\!\!\!\!\!\!&&\mbox{$n$ is even, $r\neq n$};
    \end{array}
    \right.
    \\
    \!\!\!\!\!\!&\langle \Z^3_{p,q,r}\rangle_{(0)} = 
    \left\lbrace
            \begin{array}{lll}
            \Lambda^{(0)}_r\oplus \{\C^{1}_{p,q,0}\Lambda^{n-2}_r\}\oplus \{\C^{2}_{p,q,0}\Lambda^{n-3}_r\}, &&\mbox{$n$ is odd},
            \\
            \Lambda^{(0)}_r\!\oplus \{\C^{1}_{p,q,0}\Lambda^{n-1}_r\}\oplus \{\C^{2}_{p,q,0}\Lambda^{n-2}_r\}, &&\mbox{$n$ is even}.
            \end{array}
            \right.
\end{eqnarray*}

\begin{proof} First, let us prove (\ref{centralizers_qt_ds_1}) and (\ref{centralizers_qt_ds_2_00}).
    We get 
 \begin{eqnarray}
 \Z^{\overline{23}}_{p,q,r}=\Z^{\overline{2}}_{p,q,r}\cap\Z^{\overline{3}}_{p,q,r}=\Z^2_{p,q,r}\cap\Z^3_{p,q,r}
\end{eqnarray}
by Theorem \ref{centralizers_qt}.
 For $k=1,2,3$, we obtain
 \begin{eqnarray}
     \Z^{\overline{0k}}_{p,q,r}=\Z^{\overline{0}}_{p,q,r}\cap \Z^{\overline{k}}_{p,q,r}=\Z^4_{p,q,r}\cap\Z^k_{p,q,r}=\Z^k_{p,q,r},
 \end{eqnarray}
 using  $\Z^{\overline{k}}_{p,q,r}=\Z^k_{p,q,r}$,  $\Z^{\overline{0}}_{p,q,r}=\Z^4_{p,q,r}$ by Theorem \ref{centralizers_qt} and $\Z^k_{p,q,r}\subseteq\Z^4_{p,q,r}$ by Remark \ref{CC3CC4}. Since $\Z^1_{p,q,r}=\Z_{p,q,r}$ by Remark \ref{cases_we_use}, we get $\Z^{\overline{01}}_{p,q,r}=\Z_{p,q,r}$. Similarly, for $l=2,3$, we obtain
 \begin{eqnarray}
     \Z^{\overline{1l}}_{p,q,r} = \Z^{\overline{1}}_{p,q,r}\cap\Z^{\overline{l}}_{p,q,r}=\Z^1_{p,q,r}\cap\Z^{l}_{p,q,r}=\Z_{p,q,r}\cap\Z^{l}_{p,q,r}=\Z_{p,q,r},
 \end{eqnarray}
 where we use $\Z_{p,q,r}\subseteq\Z^{2}_{p,q,r}$ and $\Z_{p,q,r}\subseteq\Z^3_{p,q,r}$ by the formula (\ref{CC3CC4_2_1}) of Remark \ref{CC3CC4}.

Let us prove (\ref{centralizers_qt_ds_2_0}). We get 
\begin{eqnarray*}
\check{\Z}^{\overline{12}}_{p,q,r}\!=\!\check{\Z}^{\overline{1}}_{p,q,r}\!\cap\!\check{\Z}^{\overline{2}}_{p,q,r}\!=\!\check{\Z}^1_{p,q,r}\!\cap\!\check{\Z}^2_{p,q,r}, \;\;\check{\Z}^{\overline{23}}_{p,q,r}\!=\!\check{\Z}^{\overline{2}}_{p,q,r}\!\cap\!\check{\Z}^{\overline{3}}_{p,q,r}\!=\!\check{\Z}^2_{p,q,r}\!\cap\!\check{\Z}^3_{p,q,r},
\end{eqnarray*}
 and $\check{\Z}^{\overline{13}}_{p,q,r}=\check{\Z}^{\overline{1}}_{p,q,r}\cap\check{\Z}^{\overline{3}}_{p,q,r}=\check{\Z}^1_{p,q,r}\cap\check{\Z}^3_{p,q,r}=\check{\Z}^1_{p,q,r}$, using Theorem  \ref{centralizers_qt} and $\check{\Z}^1_{p,q,r}\subseteq\check{\Z}^3_{p,q,r}$ by Remark \ref{CC3CC4}.
    Now we prove (\ref{centralizers_qt_ds_2}).  For $k=1,2,3$, we get
    \begin{eqnarray*}
    \check{\Z}^{\overline{0k}}_{p,q,r}&=&\check{\Z}^{\overline{0}}_{p,q,r}\cap\check{\Z}^{\overline{k}}_{p,q,r}=\Z^{4}_{p,q,r}\cap\C^{(0)}_{p,q,r}\cap\check{\Z}^k_{p,q,r}
    \\
    &=&\check{\Z}^k_{p,q,r}\cap\C^{(0)}_{p,q,r}=\Z^k_{p,q,r}\cap\C^{(0)}_{p,q,r}=\langle{\Z}^k_{p,q,r}\rangle_{(0)},
\end{eqnarray*} 
using Theorem  \ref{centralizers_qt}, $\check{\Z}^k_{p,q,r}\subseteq\Z^4_{p,q,r}$ by Remark \ref{CC3CC4}, and (\ref{same_even}).
\end{proof}

In Table \ref{table_c}, we present a comprehensive list of the centralizers and twisted centralizers of the subspaces determined by the grade involution and the reversion $\C^{\overline{k}}_{p,q,r}$ and their direct sums $\C^{\overline{kl}}_{p,q,r}$, $k,l=0,1,2,3$. The first column indicates different centralizers $\Z^{\overline{k}}_{p,q,r}$ and twisted centralizers $\Z^{\overline{kl}}_{p,q,r}$, $k,l=0,1,2,3$. The second column contains the corresponding (see Theorems \ref{centralizers_qt} and \ref{centralizers_qt_ds}) centralizers $\Z^m_{p,q,r}$ and twisted centralizers $\check{\Z}^m_{p,q,r}$ of the subspaces of fixed grades $\C^m_{p,q,r}$, while the third column contains their explicit forms.
\begin{landscape}
\begin{table}
\caption{Centralizers (C) and twisted centralizers (TC) in $\C_{p,q,r}$}\label{table_c} 
{\footnotesize
\begin{tabular}{| c | c  | c |}
\hline 
C and TC of $\C^{\overline{k}}_{p,q,r}$ and $\C^{\overline{kl}}_{p,q,r}$ & C and TC of $\C^{{m}}_{p,q,r}$ & Explicit forms \\ \hline
$\Z^{\overline{1}}_{p,q,r}=\Z^{\overline{01}}_{p,q,r}=\Z^{\overline{12}}_{p,q,r}=\Z^{\overline{13}}_{p,q,r}$ & $\Z_{p,q,r}$ & $\left\lbrace
    \begin{array}{lll}
    \Lambda^{(0)}_r\oplus\C^{n}_{p,q,r},&&\mbox{$n$ is odd},
    \\
    \Lambda^{(0)}_r,&&\mbox{$n$ is even};
     \end{array}
    \right.$ \\  \hline
    $\check{\Z}^{\overline{1}}_{p,q,r}=\check{\Z}^{\overline{13}}_{p,q,r}$ & $\check{\Z}^1_{p,q,r}$ & $\Lambda_r$ \\ \hline
    $\Z^{\overline{2}}_{p,q,r}=\Z^{\overline{02}}_{p,q,r}$ & $\Z^2_{p,q,r}$ & $\left\lbrace
    \begin{array}{lll}
\Lambda_r\oplus\C^{n}_{p,q,r},&& r\neq n,
\\
\Lambda_r,&& r=n;
\end{array}
    \right.$ \\ \hline
$\check{\Z}^{\overline{2}}_{p,q,r}$ & $\check{\Z}^{2}_{p,q,r}$ & $\left\lbrace
    \begin{array}{lll}
    \Lambda^{(0)}_r\oplus\Lambda^{n}_r\oplus \{\C^{1}_{p,q,0}\Lambda^{n-1}_r\},&&\mbox{$n$ is odd},
    \\
    \Lambda^{(0)}_r\oplus\Lambda^{n-1}_r\oplus \{\C^{1}_{p,q,0}\Lambda^{n-2}_r\}\oplus\C^{n}_{p,q,r},&&\mbox{$n$ is even},\;\; r\neq n,
    \\
    \Lambda^{(0)}_r\oplus\Lambda^{n-1}_r,&&\mbox{$n$ is even},\;\; r=n;
    \end{array}
    \right.$\\ \hline
    $\Z^{\overline{3}}_{p,q,r}=\Z^{\overline{03}}_{p,q,r}$ & $\Z^3_{p,q,r}$ & $\left\lbrace
            \begin{array}{lll}
            \Lambda^{(0)}_r\!\oplus\!\Lambda^{n-2}_r\oplus \{\C^{1}_{p,q,0}(\Lambda^{n-3}_r\oplus\Lambda^{n-2}_r)\}
            \\
            \quad\oplus \{\C^{2}_{p,q,0}\Lambda^{n-3}_r\}\oplus\C^{n}_{p,q,r}, &&\mbox{$n$ is odd},
            \\
            \Lambda^{(0)}_r\!\oplus\!\Lambda^{n-1}_r\oplus \{\C^{1}_{p,q,0}\Lambda^{\geq n-2}_r\}\oplus \{\C^{2}_{p,q,0}\Lambda^{n-2}_r\}, &&\mbox{$n$ is even};
            \end{array}
            \right.$ \\ \hline
$\check{\Z}^{\overline{3}}_{p,q,r}$ & $\check{\Z}^{3}_{p,q,r}$ & $\Lambda_r\oplus \{\C^{1}_{p,q,0}\Lambda^{\geq n-2}_r\}\oplus \{\C^{2}_{p,q,0}\Lambda^{\geq n-3}_r\}$\\ \hline
$\Z^{\overline{0}}_{p,q,r}$ & $\Z^{4}_{p,q,r}$ & $\left\lbrace
\begin{array}{lll}
\!\!\!\Lambda_r\!\oplus\! \{\C^{1}_{p,q,0}(\Lambda^{n-3}_r\!\oplus\!\Lambda^{n-2}_r)\}\!\oplus\! \{\C^{2}_{p,q,0}(\Lambda^{n-4}_r\!\oplus\!\Lambda^{n-3}_r)\}\!\oplus\!\C^{n}_{p,q,r}, \!\!\!\!\!\!\!\!\!&& r\neq n,
\\
\!\!\!\Lambda_r, \!\!\!\!\!\!\!\!\!&& r=n;
\end{array}
\right.$ \\ \hline
$\check{\Z}^{\overline{0}}_{p,q,r}$ & $\langle\Z^{4}_{p,q,r}\rangle_{(0)}$ & $\left\lbrace
    \begin{array}{lll}
\!\!\!\Lambda^{(0)}_r\oplus \{\C^{1}_{p,q,0}\Lambda^{n-2}_r\}\oplus \{\C^{2}_{p,q,0}\Lambda^{n-3}_r\},&&\!\!\!\!\!\!\!\!\! \mbox{$n$ is odd or $r=n$,}
\\
\!\!\!\Lambda^{(0)}_r\oplus \{\C^{1}_{p,q,0}\Lambda^{n-3}_r\}\oplus \{\C^{2}_{p,q,0}\Lambda^{n-4}_r\}\oplus\C^{n}_{p,q,r},&& \!\!\!\!\!\!\!\!\!\mbox{$n$ is even, $r\neq n$};
\end{array}
    \right.$ \\ \hline
$\Z^{\overline{23}}_{p,q,r}$ & $\Z^2_{p,q,r}\cap\Z^3_{p,q,r}$ & $\left\lbrace
    \begin{array}{lll}
    \Lambda^{(0)}_r\oplus \Lambda^{n-2}_r\oplus\C^{n}_{p,q,r}, &&\mbox{$n$ is odd},
    \\
    \Lambda^{(0)}_r\oplus\Lambda^{n-1}_r\oplus\{\C^{1}_{p,q,0}\Lambda^{n-1}_r\}\oplus\{\C^{2}_{p,q,0}\Lambda^{n-2}_r\},&&\mbox{$n$ is even};
    \end{array}
    \right.$\\ \hline
$\check{\Z}^{\overline{12}}_{p,q,r}$ & $\check{\Z}^1_{p,q,r}\cap\check{\Z}^2_{p,q,r}$ & $ \left\lbrace
    \begin{array}{lll}
    \Lambda^{(0)}_r\oplus\Lambda^n_r,&&\mbox{$n$ is odd},
    \\
    \Lambda^{(0)}_r\oplus\Lambda^{n-1}_r,&&\mbox{$n$ is even};
    \end{array}
    \right.$ \\ \hline
$\check{\Z}^{\overline{23}}_{p,q,r}$ & $\check{\Z}^2_{p,q,r}\cap\check{\Z}^3_{p,q,r}$ & $\left\lbrace
    \begin{array}{lll}
    \!\!\!\Lambda^{(0)}_r\!\oplus\!\Lambda^{n}_r\!\oplus\! \{\C^{1}_{p,q,0}\Lambda^{n-1}_r\},\!\!\!\!\!\!\!\!\!&&\mbox{$n$ is odd},
    \\
    \!\!\!\Lambda^{(0)}_r\!\oplus\!\Lambda^{n-1}_r\!\oplus\! \{\C^{1}_{p,q,0}\Lambda^{\geq n-2}_r\}\!\oplus\!\{\C^{2}_{p,q,0}\Lambda^{n-2}_r\},\!\!\!\!\!\!\!\!\!&&\mbox{$n$ is even};
    \end{array}
    \right.$ \\ \hline
$\check{\Z}^{\overline{01}}_{p,q,r}$ & $\langle{\Z}^1_{p,q,r}\rangle_{(0)}$& $\Lambda^{(0)}_r$\\ \hline
$\check{\Z}^{\overline{02}}_{p,q,r}$& $\langle{\Z}^2_{p,q,r}\rangle_{(0)}$ & $\left\lbrace
    \begin{array}{lll}
    \!\!\!\Lambda^{(0)}_r,\!\!\!\!\!\!\!\!&&\mbox{$n$ is odd},\;\; \mbox{$n$ is even, $r=n$},
    \\
    \!\!\!\Lambda^{(0)}_r\oplus\C^{n}_{p,q,r},\!\!\!\!\!\!\!\!&&\mbox{$n$ is even, $r\neq n$};
    \end{array}
    \right.$\\ \hline
$\check{\Z}^{\overline{03}}_{p,q,r}$& $\langle\Z^3_{p,q,r}\rangle_{(0)}$& $\left\lbrace
            \begin{array}{lll}
            \Lambda^{(0)}_r\oplus \{\C^{1}_{p,q,0}\Lambda^{n-2}_r\}\oplus \{\C^{2}_{p,q,0}\Lambda^{n-3}_r\}, &&\mbox{$n$ is odd},
            \\
            \Lambda^{(0)}_r\!\oplus \{\C^{1}_{p,q,0}\Lambda^{n-1}_r\}\oplus \{\C^{2}_{p,q,0}\Lambda^{n-2}_r\}, &&\mbox{$n$ is even}.
            \end{array}
            \right.$\\ \hline
\end{tabular}
}
\end{table}
\end{landscape}

\begin{rem}\label{rem_ds}
The equalities for the centralizer $\Z^{\overline{02}}_{p,q,r}$ and the twisted centralizer $\check{\Z}^{\overline{02}}_{p,q,r}$ of the even subspace presented in the formulas (\ref{centralizers_qt_ds_2_00}) and (\ref{centralizers_qt_ds_2}) are proved in Lemma 3.2 \cite{OnSomeLie} in the case of the degenerate and non-degenerate algebras $\C_{p,q,r}$. In the non-degenerate case  $\C_{p,q,0}$, the set $\Z^{\overline{02}}_{p,q,0}$ is considered, for example, in \cite{OnInner,HelmBook,garling} and the set $\check{\Z}^{\overline{02}}_{p,q,0}$ is considered in \cite{GenSpin}. The other equalities in the formulas 
(\ref{centralizers_qt_ds_1})--(\ref{centralizers_qt_ds_2}) are presented for the first time.
\end{rem}

In Appendix \ref{sec_tild}, we consider the twisted centralizers $\tilde{\Z}^{\overline{m}}_{p,q,r}$ and $\tilde{\Z}^{\overline{km}}_{p,q,r}$ defined as
\begin{eqnarray*} 
    \tilde{\Z}^{\overline{m}}_{p,q,r}&:=&\{X\in\C_{p,q,r}:\quad X\langle V\rangle_{(0)} + \widehat{X} \langle V\rangle_{(1)} = VX,\quad \forall V\in\C^{\overline{m}}_{p,q,r}\},
\\
\tilde{\Z}^{\overline{km}}_{p,q,r}& :=& 
 \left\lbrace
\begin{array}{lll}
\Z^{\overline{k}}_{p,q,r}\cap\check{\Z}^{\overline{m}}_{p,q,r}&&
\mbox{$k$ is even, $m$ is odd},
\\
\Z^{\overline{k}}_{p,q,r}\cap{\Z}^{\overline{m}}_{p,q,r} = \Z^{\overline{km}}_{p,q,r}, && \mbox{$k=0$, $m=2$},
\\
\check{\Z}^{\overline{k}}_{p,q,r}\cap\check{\Z}^{\overline{m}}_{p,q,r}= \check{\Z}^{\overline{km}}_{p,q,r}, && \mbox{$k=1$, $m=3$};
\end{array}
\right.
\end{eqnarray*}
and write out their explicit forms.

\section{Conclusions}\label{section_conclusions}

In this work, we consider the centralizers and  twisted centralizers in degenerate and non-degenerate Clifford algebras $\C_{p,q,r}$. 
In Theorems \ref{theorem_cc_k_even}, \ref{centralizers_qt}, and \ref{centralizers_qt_ds}, we find an explicit form of the centralizers
and the twisted centralizers 
\begin{eqnarray*}
    \Z^k_{p,q,r},\;\;\Z^{\overline{m}}_{p,q,r},\;\; \Z^{\overline{ml}}_{p,q,r},\qquad \check{\Z}^k_{p,q,r},\;\; \check{\Z}^{\overline{m}}_{p,q,r},\;\; \check{\Z}^{\overline{ml}}_{p,q,r}
\end{eqnarray*}
of the subspaces of fixed grades $\C^{k}_{p,q,r}$, $k=0,1,\ldots,n$, the subspaces $\C^{\overline{m}}_{p,q,r}$, $m=0,1,2,3$, determined by the grade involution and the reversion, and their direct sums $\C^{\overline{ml}}_{p,q,r}$, $m,l=0,1,2,3$. 
In particular, we consider the centralizers and twisted centralizers of the even $\C^{(0)}_{p,q,r}$ and odd $\C^{(1)}_{p,q,r}$ subspaces. 
The relations between $\Z^k_{p,q,r}$ and $\check{\Z}^k_{p,q,r}$ for different $k$ are considered in Remark \ref{CC3CC4}. 
We also consider the relation between their projections $\langle\Z^k_{p,q,r}\rangle_{(1)}$ and $\langle \check{\Z}^k_{p,q,r}\rangle_{(1)}$ onto the odd subspace $\C^{(1)}_{p,q,r}$ in Lemmas \ref{zm_zm1}, \ref{zl_zm} and Remark \ref{rem_zlzm}. 

In the particular cases of the non-degenerate Clifford algebras $\C_{p,q,0}$ and the Grassmann algebras $\C_{0,0,n}$, the considered centralizers and the twisted centralizers have a simpler form than in the general case of arbitrary $\C_{p,q,r}$ (see Remarks \ref{remark_CmCm_pq0} and \ref{remark_00n} respectively).
In the particular case of small $k$, the centralizers $\Z^k_{p,q,r}$ and the twisted centralizers $\check{\Z}^k_{p,q,r}$ have simple form as well and are written out in Remark \ref{cases_we_use} for $k\leq 4$.

In the further research, we are going to use the explicit forms of the centralizers and the twisted centralizers presented in Theorems \ref{theorem_cc_k_even}, \ref{centralizers_qt}, and \ref{centralizers_qt_ds} to define and study several families of Lie
groups in $\C_{p,q,r}$. These groups preserve the subspaces $\C^{\overline{m}}_{p,q,r}$ and their direct sums under the adjoint and twisted adjoint representations. These  Lie groups can be considered as generalizations of Clifford and Lipschitz groups and are important for the theory of spin groups.
We hope that the explicit forms of centralizers and twisted centralizers can be useful for applications of Clifford algebras in physics \cite{phys,hestenes,ce}, computer science, in particular, for neural networks and machine learning \cite{b1,h1,tf,cNN0,cNN}, image processing \cite{cv1,ce}, and in other areas.

\appendix
\section{Twisted centralizers $\tilde{\Z}^{m}_{p,q,r}$, $\tilde{\Z}^{\overline{m}}_{p,q,r}$, and $\tilde{\Z}^{\overline{km}}_{p,q,r}$}\label{sec_tild}
This paper considers the twisted centralizers $\check{\Z}^m_{p,q,r}$, $m=0,\ldots,n$, defined as (\ref{def_chZm}). As recommended by one of the respected reviewers, we consider the other twisted centralizers of the fixed grade subspaces defined as
\begin{eqnarray}
    \tilde{\Z}^m_{p,q,r}:=\{X\in\C_{p,q,r}:\quad X\langle V\rangle_{(0)} + \widehat{X} \langle V\rangle_{(1)} = VX,\quad \forall V\in\C^{m}_{p,q,r}\}.
\end{eqnarray}
This definition corresponds to the twisted adjoint representation $\tilde{\ad}$ (see the formula (\ref{twa22}) in Section \ref{section_examples}). We have 
\begin{eqnarray}
\tilde{\Z}^m_{p,q,r} = 
    \left\lbrace
\begin{array}{lll}
 \Z^m_{p,q,r}, && \mbox{$m$ is even},
\\
\check{\Z}^m_{p,q,r}, && \mbox{$m$ is odd}.
\end{array}
\right.
\end{eqnarray}
In the case $m=0$, we have by (\ref{chCC_0})
\begin{eqnarray}
\tilde{\Z}^{0}_{p,q,r} = \C_{p,q,r}.
\end{eqnarray}
Using Theorem \ref{theorem_cc_k_even}, we get an explicit form (\ref{zt_1})--(\ref{zt_2}) of $\tilde{\Z}^m_{p,q,r}$, $m=1,\ldots,n$.
For an arbitrary even $m$, where $n\geq m\geq 2$, the twisted centralizer has the form
    \begin{eqnarray}\label{zt_1}
    \tilde{\Z}^m_{p,q,r}=\Lambda^{\leq n-m-1}_r\oplus \bigoplus_{k=1\mod{2}}^{m-3} \{\C^{k}_{p,q,0}\Lambda^{\geq n-(m-1)}_r\}\nonumber
    \\
    \oplus\bigoplus_{k=0\mod{2}}^{m-2}\{\C^{k}_{p,q,0}\Lambda^{\geq n-m}_r\} \oplus \C^{n}_{p,q,r}.
    \end{eqnarray}
For an arbitrary odd $m$, where $n\geq m \geq 1$, we have
    \begin{eqnarray}\label{zt_2}
    \tilde{\Z}^m_{p,q,r}=\Lambda_r^{\leq n-m-1}\oplus\!\!\!\!\!\!\bigoplus_{k=1\mod{2}}^{m-2}\!\!\!\!\{\C^{k}_{p,q,0}\Lambda_r^{\geq n-(m-1)}\}\oplus\!\!\!\!\!\!\bigoplus_{k=0\mod{2}}^{m-1}\!\!\!\! \{\C^k_{p,q,0}\Lambda_r^{\geq n-m}\}.
    \end{eqnarray}

Consider the twisted centralizers of the subspaces $\C^{\overline{m}}_{p,q,r}$, $m=0,1,2,3$:
\begin{eqnarray} 
    \tilde{\Z}^{\overline{m}}_{p,q,r}:=\{X\in\C_{p,q,r}:\quad X\langle V\rangle_{(0)} + \widehat{X} \langle V\rangle_{(1)} = VX,\quad \forall V\in\C^{\overline{m}}_{p,q,r}\}.
\end{eqnarray}
Note that
\begin{eqnarray}
\tilde{\Z}^{\overline{m}}_{p,q,r} = 
    \left\lbrace
\begin{array}{lll}
 \Z^{\overline{m}}_{p,q,r}, && \mbox{$m$ is even},
\\
\check{\Z}^{\overline{m}}_{p,q,r}, && \mbox{$m$ is odd}.
\end{array}
\right.
\end{eqnarray}
We have
\begin{eqnarray}
    \tilde{\Z}^{\overline{0}}_{p,q,r} = \Z^4_{p,q,r};\quad \tilde{\Z}^{\overline{k}}_{p,q,r}  = \check{\Z}^k_{p,q,r},\quad k=1,3;\quad \tilde{\Z}^{\overline{2}}_{p,q,r} = \Z^{2}_{p,q,r}.
\end{eqnarray}

Consider the twisted centralizers of the direct sums $\C^{\overline{km}}_{p,q,r}$, $k,m=0,1,2,3$:
\begin{eqnarray}
\tilde{\Z}^{\overline{km}}_{p,q,r} := 
 \left\lbrace
\begin{array}{lll}
\Z^{\overline{k}}_{p,q,r}\cap\check{\Z}^{\overline{m}}_{p,q,r}&&
\mbox{$k$ is even, $m$ is odd},
\\
\Z^{\overline{k}}_{p,q,r}\cap{\Z}^{\overline{m}}_{p,q,r} = \Z^{\overline{km}}_{p,q,r}, && \mbox{$k=0$, $m=2$},
\\
\check{\Z}^{\overline{k}}_{p,q,r}\cap\check{\Z}^{\overline{m}}_{p,q,r}= \check{\Z}^{\overline{km}}_{p,q,r}, && \mbox{$k=1$, $m=3$}.
\end{array}
\right.
\end{eqnarray}
We have
    \begin{eqnarray}
        &\tilde{\Z}^{\overline{02}}_{p,q,r} = {\Z}^{\overline{02}}_{p,q,r}=\Z^2_{p,q,r} =\left\lbrace
    \begin{array}{lll}
\Lambda_r\oplus\C^{n}_{p,q,r},&& r\neq n,
\\
\Lambda_r,&& r=n;
\end{array}
    \right.\
    \\
    &\tilde{\Z}^{\overline{13}}_{p,q,r} = \check{\Z}^{\overline{13}}_{p,q,r}=\tilde{\Z}^{\overline{01}}_{p,q,r}=\tilde{\Z}^{\overline{12}}_{p,q,r}=\Lambda_r,
        \\
        & \tilde{\Z}^{\overline{03}}_{p,q,r} = 
            \left\lbrace
\begin{array}{lll}
        \Lambda_r\oplus \{\C^1_{p,q,0}\Lambda^{\geq n-2}_r\}\oplus \{\C^2_{p,q,0}\Lambda^{\geq n-3}_r\},&& r\neq n,
        \\
        \Lambda_r,&&r=n;
        \end{array}
\right.
\\
& \tilde{\Z}^{\overline{23}}_{p,q,r} = 
\left\lbrace
\begin{array}{lll}
\Lambda_r\oplus \{\C^1_{p,q,0}\Lambda^{n-1}_r\}\oplus\{\C^2_{p,q,0}\Lambda^{n-2}_r\}, && r\neq n,
\\
\Lambda_r, && r=n.
\end{array}
\right.
    \end{eqnarray}

\section*{Acknowledgements}

The results of this paper were reported at the 13th International Conference on Clifford Algebras and Their Applications in Mathematical Physics, Holon, Israel, June 2023. The authors are grateful to the organizers and the participants of this conference for fruitful discussions.

The authors are grateful to the anonymous reviewers for valuable comments on how to improve the presentation.

The publication was prepared within the framework of the Academic Fund Program at HSE University (grant 24-00-001 Clifford algebras and applications).

\medskip

\noindent{\bf Data availability} Data sharing not applicable to this article as no datasets were generated or analyzed during the current study.

\medskip

\noindent{\bf Declarations}\\
\noindent{\bf Conflict of interest} The authors declare that they have no conflict of interest.

\bibliographystyle{spmpsci}

% % ------------------------------------------------------------------------
\end{document}